\documentclass[reqno,12pt]{amsart}

\parskip = \bigskipamount
\usepackage{amssymb}
\usepackage{amscd}
\usepackage[all]{xy}
\usepackage{bbm}
\usepackage{mathrsfs}
\usepackage{enumerate}
\usepackage{tikz}
\usepackage[margin=1.5in]{geometry}
\usepackage{stmaryrd}

\newcommand\bcdot{\ensuremath{%
  \mathchoice%
   {\mskip\thinmuskip\lower0.2ex\hbox{\scalebox{1.5}{$\cdot$}}\mskip\thinmuskip}}%
   {\mskip\thinmuskip\lower0.2ex\hbox{\scalebox{1.5}{$\cdot$}}\mskip\thinmuskip}%
   {\lower0.3ex\hbox{\scalebox{1.2}{$\cdot$}}}%
   {\lower0.3ex\hbox{\scalebox{1.2}{$\cdot$}}}%
}

\def\XXint#1#2#3{{\setbox0=\hbox{$#1{#2#3}{%
\int}$ }
\vcenter{\hbox{$#2#3$ }}\kern-.6\wd0}}

\newcommand{\N}{\mathbb{N}}
\newcommand{\Z}{\mathbb{Z}}

\newcommand{\R}{\mathbb{R}}

\newcommand{\inv}{^{-1}}

\newcommand{\eps}{\varepsilon}

\newcommand{\tensor}{\otimes}

\newcommand{\Hom}{\operatorname{Hom}}
\newcommand{\del}{\nabla}
\newcommand{\ind}{\operatorname{ind}}

\newcommand{\lap}{\Delta}

\newcommand{\bd}{\partial}
\newcommand{\cl}{\overline}

\renewcommand{\div}{\operatorname{div}}

\newcommand{\RP}{{\R \text{\normalfont P}}}

\newcommand{\grad}{\del}
\newcommand{\vol}{\operatorname{vol}}

\newcommand{\f}{\colon}

\newenvironment{customthm}[1]
  {\innercustomthm}
  {\endinnercustomthm}

\theoremstyle{plain}
\newtheorem{theorem}{Theorem}[section]

\newtheorem{prop}[theorem]{Proposition}
\newtheorem{lem}[theorem]{Lemma}

\theoremstyle{definition}
\newtheorem{defn}[theorem]{Definition}
\newtheorem*{rem}{Remark}

\begin{document}

\title{A Weyl Law for the $p$-Laplacian}
\author{Liam Mazurowski}
\address{University of Chicago, Department of Mathematics, Chicago, IL 60637}
\email{maz@math.uchicago.edu}

\begin{abstract}
We show that a Weyl law holds for the variational spectrum of the $p$-Laplacian.  More precisely, let $(\lambda_i)_{i=1}^\infty$ be the variational spectrum of $\lap_p$ on a closed Riemannian manifold $(X,g)$ and let $N(\lambda) = \#\{i:\, \lambda_i < \lambda\}$ be the associated counting function. Then we have a Weyl law 
\[
N(\lambda) \sim c \vol(X) \lambda^{n/p}.
\]
This confirms a conjecture of Friedlander.  The proof is based on ideas of Gromov \cite{G} and Liokumovich, Marques, Neves \cite{LMN}.
\end{abstract}

\maketitle

\section{Introduction}

The classical Weyl law states that the Dirichlet eigenvalues $\lambda_i$ of $\lap$  on a domain $U\subset \R^n$ grow according to the asymptotics
\[
\#\{i:\, \lambda_i < \lambda\} \sim c \vol(U) \lambda^{n/2}
\]
where $c$ is a universal constant that depends only on $n$.  
Here the notation $f(\lambda) \sim g(\lambda)$ means that $f(\lambda)/g(\lambda) \to 1$ as $\lambda \to \infty$.  

Briefly, the idea of the proof is to use the variational characterization 
\begin{equation}
\label{vc}
\lambda_i = \inf_{i\text{-planes } P\subset W^{1,2}_0(U)}\bigg( \sup_{u\in P\setminus \{0\}} \frac{\int_U \vert \grad u\vert^2}{\int_U \vert u\vert^2}\bigg)
\end{equation}
to relate the eigenvalues of $\lap$ on $U$ with the eigenvalues of $\lap$ on a union of cubes that closely approximates $U$.  The eigenvalues of $\lap$ on a cube can be computed explicitly and the formula then follows.  The Weyl law also holds for the spectrum of $\lap$ on a closed Riemannian manifold.  This can be proved by studying the asymptotics of the heat kernel.

In \cite{AA}, the authors used variational methods to produce a sequence of eigenvalues $\lambda_i$ for the $p$-Laplacian $\lap_p$.  These eigenvalues are given by a min-max formula roughly similar to (\ref{vc}). 
 In \cite{F}, Friedlander studied the asymptotic growth of these eigenvalues of  $\lap_p$ and proved growth bounds of the form 
\[
C_1\vol(U)\lambda^{n/p} \le \#\{i:\, \lambda_i < \lambda\} \le C_2\vol(U)\lambda^{n/p}
\]
for some constants $C_1$ and $C_2$ that depend only on $n$ and $p$. 
Moreover, Friedlander conjectured that a Weyl law should hold in this setting.
In this paper, we prove the following theorem which confirms Friedlander's conjecture.

\begin{customthm}{A}
Let $1 < p < \infty$.  Let $(X^n,g)$ be a closed Riemannian manifold and let $(\lambda_i)$ be the variational spectrum of $\lap_p$ on $X$.  Then
\[
\#\{i:\, \lambda_i < \lambda\} \sim c \vol(X) \lambda^{n/p}
\]
where $c$ is a universal constant that depends only on $n$ and $p$.
\end{customthm}

\noindent The proof is based on a general framework for studying Weyl laws proposed by Gromov in \cite{G}.  We also use ideas from the proof of the Weyl law for the volume spectrum due to Liokumovich, Marques, and Neves \cite{LMN}.  It is worth noting that the proof seems to be new even in the case $p=2$ in the sense that it avoids the use of the heat kernel. 

\noindent {\bf Acknowledgements.}  I would like to thank my advisor Andr\' e Neves for suggesting this problem and for many helpful discussions. 

\section{Preliminaries: The Variational Spectrum of $\lap_p$}

Let $U \subset \R^n$ be a bounded open set with Lipschitz boundary.  If $u$ is a smooth function on $U$ then the $p$-Laplacian of $u$ is defined by 
\[
\lap_p u := \div(\vert \grad u\vert^{p-2}\grad u).
\]
The Dirichlet eigenvalue problem for $\lap_p$ asks for a function $u$ satisfying
\begin{equation}
\label{d}
\begin{cases}
-\lap_p u = \lambda \vert u\vert^{p-2} u, & \text{in } U\\
u = 0, & \text{on } \bd U.
\end{cases}
\end{equation}
A function $u\in W^{1,p}_0(U)$ is called a weak solution of (\ref{d}) provided 
\[
\int_U \vert \grad u\vert^{p-2} \grad u \cdot \grad \phi = \lambda \int_U \vert u\vert^{p-2} u \phi
\]
for every test function $\phi\in C^\infty_c(U)$.  If $u$ is a non-trivial weak solution of (\ref d) then $\lambda$ is called a Dirichlet eigenvalue for $\lap_p$ on $U$ and $u$ is called an eigenfunction with eigenvalue $\lambda$.  

It is possible to use variational methods to produce a sequence of eigenvalues for $\lap_p$ on $U$.  This was done in \cite{AA}.  There is also a detailed treatment in the book \cite{PAO}.  For clarity we outline the argument from \cite{PAO} below.  
Let $\mathcal M^0 = \mathcal M^0(U)$ be the set 
\[
\mathcal M^0 := \left\{u\in W^{1,p}_0(U):\, \| \grad u\|_{L^p(U)} = 1\right\}
\]
equipped with the topology it inherits as a subspace of $W^{1,p}_0$ with the norm topology.  
This is a $C^1$ Banach manifold (see \cite{PAO}).  There is an energy functional $E:\mathcal M^0\to \R$ given by 
\[
E(u) := \frac{\int_U \vert \grad u\vert^p}{\int_U \vert u\vert^p}
\]
and the eigenfunctions of (\ref{d}) are precisely the critical points of $E$ on $\mathcal M^0$. Moreover, $E$ satisfies the Palais-Smale compactness condition (\cite{PAO} Lemma 4.5). 

Critical points of $E$ can be produced using a min-max argument with the cohomological index.   The following discussion of the cohomological index is based on Chapter 2 of \cite{PAO}.  Note that there is a natural $\Z_2$-action on $\mathcal M^0$ and that $E$ respects this action, i.e., $E(u) = E(-u)$. 

\begin{defn}
A subset $A\subset \mathcal M^0$ is symmetric provided that $u\in A$ if and only if $-u\in A$ for every $u\in \mathcal M^0$.  
\end{defn}

\begin{defn}
A $\Z_2$-space is a Hausdorff, paracompact topological space equipped with a free $\Z_2$ action.
\end{defn}

\begin{defn}
Let $A$ and $B$ be $\Z_2$-spaces.  A map $f:A\to B$ is called odd provided 
$
f(-a) = -f(a) \text{ for all } a\in A.
$
\end{defn}

A symmetric set $A \subset \mathcal M^0$ is paracompact since $\mathcal M^0$ is a metric space and every metric space is paracompact.   Moreover, $A$ comes equipped with a free $\Z_2$-action $u\mapsto -u$. 
Thus $A$ is a $\Z_2$-space.  Now assume that $A$ is any $\Z_2$-space and let $\overline A$ be the quotient $A/\Z_2$. Then 
\[
\pi:A\to \overline A
\]
is a principal $\Z_2$-bundle over $\overline A$ and so there is a classifying map 
$
f\colon \overline A \to \RP^\infty.
$
Let $H^*_{AS}$ denote Alexander-Spanier cohomology.
The cohomology ring $H_{AS}^*(\RP^\infty;\Z_2)$ is isomorphic to $\Z_2[\sigma]$ where $\sigma$ is the non-zero element in $H_{AS}^1(\RP^\infty;\Z_2)$ and the classifying map induces a map in cohomology 
\[
f^*:H_{AS}^*(\RP^\infty;\Z_2) \to H_{AS}^*(\overline A;\Z_2).
\]
The following definition is originally due to Fadell and Rabinowitz \cite{FR}.
\begin{defn}
Let $A$ be a $\Z_2$-space and let $f$ be a classifying map for the bundle $\pi\f A\to \overline A$.  Then the cohomological index of $A$ is 
\[
\ind_{AS}(A) = \sup\{k\ge 1:\, f^*(\sigma^{k-1}) \neq 0 \text{ in } H_{AS}^*(\overline A;\Z_2)\}.  
\]
By convention  $\ind_{AS}(\emptyset) = 0$.
\end{defn}
\noindent
The properties of the cohomological index will be discussed further in the next section of the paper.  


It remains to perform the min-max argument.  For complete details see Chapter 4 of \cite{PAO}.  Let $\mathcal F^0 = \mathcal F^0(U)$ be the collection of all symmetric subsets of $\mathcal M^0$.  Define the classes
\[
\mathcal F_k^0 = \mathcal F_k^0(U) = \{A \in \mathcal F^0:\, \ind_{AS}(A) \ge k\}
\]
and then consider the min-max values 
\[
\lambda_k = \lambda_k(U) = \inf_{A\in \mathcal F_k^0} \sup_{u\in A} E(u).  
\]
By \cite{PAO} Theorem 4.6, the numbers $\lambda_k$ are Dirichlet eigenvalues of $\lap_p$ on $U$ and $\lambda_k \to \infty$ as $k\to\infty$.   Moreover, if we define the counting function 
\[
N_U^0(\lambda) = \#\{j:\, \lambda_j < \lambda\}
\]
then $N_U^0$ satisfies $N_U^0(\lambda) = \ind_{AS}(E\inv[0,\lambda))$.   The sequence $(\lambda_k)$ is called the variational spectrum of $\lap_p$ on $U$.  
It doesn't seem to be known (see \cite{PAO} page 71) whether the variational spectrum contains every eigenvalue of $\lap_p$.

\begin{rem}
Let $A$ be a $\Z_2$-space with classifying map $f$.  Let $H^*$ denote singular cohomology and let $\sigma$ be the non-zero element in $H^1(\RP^\infty;\Z_2)$.  Define 
\[
\ind(A) = \sup\{k\ge 1:\, f^*(\sigma^{k-1})\neq 0 \text{ in } H^*(\overline A;\Z_2)\}.
\]
If $A$ is locally contractible then the Alexander-Spanier cohomology of $A$ is isomorphic to the singular cohomology of $A$ (see \cite{S}) and thus $\ind_{AS}(A) = \ind(A)$.  Suppose now that $A = E\inv[0,\lambda) \subset \mathcal M^0$. Note that $\mathcal M^0$ is locally contractible since it is a Banach manifold.  Thus $A$ is also locally contractible since it is an open subset of a locally contractible space.   It follows that $\ind_{AS}(A) = \ind(A)$.  In particular, the counting function satisfies $N^0_U(\lambda) = \ind(E\inv[0,\lambda))$.
\end{rem}

\section{Preliminaries: The Neumann Problem}

It will be useful to simultaneously investigate the corresponding Neumann eigenvalue problem
\[
\begin{cases}
-\lap_p u = \lambda \vert u\vert^{p-2}u, &\text{in } U\\
\frac{\bd u}{\bd \nu} = 0, &\text{on } \bd U.
\end{cases}
\]
To this end, define the set 
\[
\mathcal M = \mathcal M(U) := \left\{ u\in W^{1,p}(U):\, \| u\|_{W^{1,p}(U)} = 1\right\}.
\]
Again there is an energy $E:\mathcal M\to \R$ given by 
\[
E(u) = \frac{\int_U \vert \grad u\vert^p}{\int_U \vert u\vert^p}.
\]
Define the counting function
\[
N_U(\lambda) = \ind(E\inv[0,\lambda)).
\]
Then $N_U^0(\lambda) \le N_U(\lambda)$ for every $\lambda$. 
We will show that the Neumann counting function also satisfies a Weyl law $N_U(\lambda) \sim c \vol(U) \lambda^{n/p}$. 

\begin{rem}
Throughout the paper objects associated with the Dirichlet problem will be decorated with a superscript zero while the corresponding objects in the Neumann problem will appear without decoration.  Thus $N^0_U$ denotes the counting function for the Dirichlet problem while $N_U$ denotes the counting function for the Neumann problem, and so on.  This is consistent with the notation in \cite{G}.
\end{rem}

\section{Preliminaries: Properties of the Cohomological Index}

This section collects a few properties of the cohomological index that will be needed later.
Proofs of the following properties can be found in \cite{FR} or \cite{PAO}.  Since the proofs are relatively short, we reproduce them below for the convenience of the reader.
 
\begin{prop}
\label{st}
The index $\ind$ satisfies the following properties.
\begin{enumerate}
\item[(i)] If $A$, $B$ are $\Z_2$-spaces and there is an odd continuous map $f:A\to B$ then $\ind(A) \le \ind(B)$.
\item[(ii)] If $X$ is a $\Z_2$-space and $A,B\subset X$ are open, paracompact symmetric subsets with $X = A\cup B$ then $\ind(X) \le \ind(A) + \ind(B)$.
\end{enumerate}
\end{prop}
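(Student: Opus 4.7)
The plan is to reduce both properties to naturality of the classifying map, combined in the case of (ii) with a relative cup product argument on the pairs $(\overline X, \overline A)$ and $(\overline X, \overline B)$.

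For (i), the odd continuous map $f\colon A\to B$ is $\Z_2$-equivariant, so it descends to a continuous map $\overline f\colon \overline A\to \overline B$ and lifts to a morphism of principal $\Z_2$-bundles covering $\overline f$. Consequently, if $g_B\colon \overline B\to \RP^\infty$ classifies the bundle $B\to \overline B$, then $g_B\circ \overline f$ classifies $A\to \overline A$, and hence is homotopic to any fixed classifying map $g_A\colon \overline A\to \RP^\infty$. Taking induced maps in cohomology yields $g_A^* = \overline f^*\circ g_B^*$, so $g_B^*(\sigma^{k-1}) = 0$ forces $g_A^*(\sigma^{k-1}) = 0$. Contrapositively, every $k$ contributing to $\ind(A)$ also contributes to $\ind(B)$, giving $\ind(A)\le \ind(B)$.

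For (ii), write $a = \ind(A)$ and $b = \ind(B)$; we may assume both are finite, since otherwise the inequality is vacuous. Fix a classifying map $g_X\colon \overline X\to \RP^\infty$ for the bundle $\pi\colon X\to \overline X$. Since $A$ and $B$ are symmetric and open in $X$, their quotients $\overline A$ and $\overline B$ are open in $\overline X$ and cover it, and the restrictions of $g_X$ to them classify $A\to \overline A$ and $B\to \overline B$ respectively. By the definition of $\ind$, the classes $g_X^*(\sigma^a)$ and $g_X^*(\sigma^b)$ vanish upon restriction to $\overline A$ and $\overline B$. Using the long exact sequences of the pairs $(\overline X,\overline A)$ and $(\overline X,\overline B)$, choose lifts $\alpha\in H^a(\overline X,\overline A;\Z_2)$ and $\beta\in H^b(\overline X,\overline B;\Z_2)$ of these two classes. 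Their relative cup product $\alpha\cup\beta$ lies in $H^{a+b}(\overline X,\overline A\cup\overline B;\Z_2) = H^{a+b}(\overline X,\overline X;\Z_2) = 0$, and restricts to $g_X^*(\sigma^a)\cup g_X^*(\sigma^b) = g_X^*(\sigma^{a+b})$ in $H^{a+b}(\overline X;\Z_2)$. Hence $g_X^*(\sigma^{a+b}) = 0$, so $\ind(X)\le a+b$.

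The main subtlety I anticipate is justifying the existence and the expected multiplicative behavior of the relative cup product $H^a(\overline X,\overline A;\Z_2)\otimes H^b(\overline X,\overline B;\Z_2)\to H^{a+b}(\overline X,\overline A\cup\overline B;\Z_2)$; this is precisely where the openness and paracompactness hypotheses enter, ensuring that the standard construction of the relative cup product (in singular or, equivalently by the remark, Alexander–Spanier cohomology) applies to the cover $\{\overline A,\overline B\}$ of $\overline X$. The remaining ingredients---equivariance of $f$ producing a bundle map, uniqueness of classifying maps up to homotopy, and the long exact sequence of a pair---are routine, so the bulk of the care will lie in setting up the pairing of pairs correctly.
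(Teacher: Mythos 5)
Your proposal is correct and follows essentially the same route as the paper's proof: part (i) via naturality of classifying maps under the induced map on quotients, and part (ii) by lifting $g_X^*(\sigma^a)$ and $g_X^*(\sigma^b)$ to relative classes and using the relative cup product into $H^{a+b}(\overline X,\overline A\cup\overline B)=0$. The point you flag as the main subtlety---that openness of $A$ and $B$ is what licenses the relative cup product on the cover $\{\overline A,\overline B\}$---is exactly the hypothesis the paper invokes at the same step.
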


\begin{proof}
(i) Let $\overline f:\overline A \to \overline B$ be the induced map and let $g:\overline B\to \RP^\infty$ be a classifying map.  Then $g \overline f:\overline A \to \RP^\infty$ serves as a classifying map for $\overline A$.  Notice that there are maps
\[
H^*(\RP^\infty;\Z_2) \overset{g^*}{\longrightarrow} H^*(\overline B;\Z_2) \overset{\overline f^*}\longrightarrow H^*(\overline A;\Z_2)
\]
and thus $\ind(A)\le \ind(B)$.  

(ii) Let $f:\overline X\to \RP^\infty$ be a classifying map and let 
\[
i_A:\overline A\to \overline X, \quad i_B:\overline B\to\overline X
\]
be the inclusions.  Then $f \iota_A$ is a classifying map for $\overline A$ and $f\iota_B$ is a classifying map for $\overline B$.  Without loss, we may assume that $\ind(A) = i$ and $\ind(B) = j$ are finite.  Put $\theta = f^*\sigma$.  Then $\iota_A^*\theta^i = 0$ and $\iota_B^*\theta^j = 0$.  There are exact sequences
\begin{gather*}
H^i(\overline X,\overline A) \to H^i(\overline X) \to H^i(\overline A),\\
H^j(\overline X,\overline B) \to H^j(\overline X) \to H^i(\overline B),
\end{gather*}
and hence there are classes $\theta_A \in H^i(\overline X;\overline A)$ and $\theta_B\in H^j(\overline X;\overline B)$ that map to $\theta$. 

Since $A$, $B$ are open in $X$ there is a relative cup product 
\[
\smile:H^*(\overline X,\overline A) \times H^*(\overline X,\overline B) \to H^*(\overline X, \overline A\cup \overline B).
\]  
Moreover, since $\overline X = \overline A \cup \overline B$, it follows that $\theta_A \smile \theta_B = 0$ in $H^*(\overline X,\overline A \cup \overline B)$.  By naturality, $\theta^{i+j}$ is the image of $\theta_A \smile \theta_B$ under the map 
\[
H^*(\overline X,\overline A \cup \overline B) \to H^*(\overline X).
\]
Therefore $\theta^{i+j} = 0$ and the result follows.
\end{proof}

One further property will be required.  Assume that $A$ and $B$ are $\Z_2$-spaces.  By definition, their join is the quotient  
\[
A*B := (A\times B\times [0,1])/\sim
\]
where $(a_1,b,0)\sim (a_2,b,0)$ and $(a,b_1,1)\sim(a,b_2,1)$.  This is also $\Z_2$-space in a natural way.  

\begin{prop}
\label{pj}
Let $A$ and $B$ be $\Z_2$-spaces.  Then 
\[
\ind(A) + \ind(B) \le \ind(A* B).
\]  
\end{prop}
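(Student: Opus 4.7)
The plan is to build a classifying map for $A*B$ as the join of equivariant classifying maps for $A$ and $B$, and then to perform a Mayer--Vietoris computation dual to the one in the proof of Proposition~\ref{st}(ii).

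Without loss of generality, assume $\ind(A) = i$ and $\ind(B) = j$ are both finite; otherwise the inequality is vacuous. Since $S^\infty = E\Z_2$ is equivariantly contractible, the classifying maps $f_A\colon \overline A \to \RP^\infty$ and $f_B\colon \overline B \to \RP^\infty$ lift to $\Z_2$-equivariant maps $\tilde f_A\colon A \to S^\infty$ and $\tilde f_B\colon B \to S^\infty$. First, I would form their join
\[
\tilde h := \tilde f_A * \tilde f_B\colon A*B \to S^\infty * S^\infty,\qquad (a,b,t) \mapsto (\tilde f_A(a), \tilde f_B(b), t),
\]
which is $\Z_2$-equivariant for the diagonal action. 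The standard identification $S^m*S^n \cong S^{m+n+1}$ as the unit sphere in $\R^{m+n+2}$ is compatible with the antipodal and join $\Z_2$-actions, so passing to the colimit gives an equivariant equivalence $S^\infty * S^\infty \simeq S^\infty$; hence $\tilde h$ descends to a classifying map $h\colon \overline{A*B}\to\RP^\infty$. Writing $c := h^*(\sigma)$, the task becomes showing $c^{i+j-1}\neq 0$.

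Next I would cover $A*B$ by the symmetric open sets $U = \{t>1/3\}$ and $V = \{t<2/3\}$, which equivariantly deformation retract onto the $t=1$ slice $A$ and the $t=0$ slice $B$, respectively, while $U\cap V$ retracts onto $A\times B$ with the diagonal $\Z_2$-action. Hence $\overline U \simeq \overline A$, $\overline V \simeq \overline B$, and $\overline{U\cap V} \simeq Q := (A\times B)/\Z_2^{\mathrm{diag}}$; by naturality, $c$ restricts to $c_A$ on $\overline U$ and to $c_B$ on $\overline V$. Since $c_A^i = 0$ and $c_B^j = 0$, the restrictions of $c^{i+j-1}$ to both $\overline U$ and $\overline V$ vanish, so the Mayer--Vietoris sequence writes $c^{i+j-1} = \delta(\eta)$ for some $\eta \in H^{i+j-2}(Q;\Z_2)$.

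\textbf{The hard part} is exhibiting a nonzero $\eta$ with $\delta(\eta) \ne 0$, equivalently one not in the image of $H^{i+j-2}(\overline U) \oplus H^{i+j-2}(\overline V) \to H^{i+j-2}(Q)$. This step is dual to Proposition~\ref{st}(ii): there a relative cup product lived in the trivial group $H^{i+j}(\overline X, \overline X)$ and killed $c^{i+j}$, whereas here one must produce an ``external product'' of $c_A^{i-1}$ and $c_B^{j-1}$ on the mixed quotient $Q$ that survives because of the nontrivial geometry of $Q$. Viewing $Q$ as the total space of the fiber bundle $B \to Q \to \overline A$ associated to the double cover $A \to \overline A$, I would identify $\eta$ as a class whose fiber restriction is $c_B^{j-1}$, which is nonzero by hypothesis, and then compute $\delta(\eta) = c^{i-1}\smile c\smile c^{j-1} = c^{i+j-1}$ using the multiplicative structure of the connecting homomorphism. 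Justifying this cochain-level computation rigorously --- either via a Serre spectral sequence argument or by writing $\delta$ explicitly as a cup product with a relative fundamental class along $U\cap V$ --- is the technical heart of the proof.
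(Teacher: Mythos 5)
Your construction of the classifying map $h\colon \overline{A*B}\to\RP^\infty$ as the join of equivariant lifts is exactly the paper's first step, and your Mayer--Vietoris setup ($\overline U\simeq\overline A$, $\overline V\simeq\overline B$, $\overline{U\cap V}\simeq Q=(A\times B)/\Z_2$, hence $c^{i+j-1}\in\operatorname{im}\delta$) is correct as far as it goes. But the entire content of the proposition is the step you defer: knowing $c^{i+j-1}=\delta(\eta)$ for \emph{some} $\eta$ says nothing about whether $c^{i+j-1}$ vanishes. To conclude you must (a) produce a specific $\eta\in H^{i+j-2}(Q;\Z_2)$, (b) prove the identity $\delta(\eta)=c^{i+j-1}$, and (c) prove that $\eta$ is not in the image of $H^{i+j-2}(\overline U)\oplus H^{i+j-2}(\overline V)$. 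None of these is carried out. The asserted formula $\delta(\eta)=c^{i-1}\smile c\smile c^{j-1}$ has no visible source: the standard $H^*$-module property of the connecting map gives $\delta(x|_{Q}\smile\eta_0)=x\smile\delta(\eta_0)$, and producing the extra middle factor of $c$ this way would force $\delta(\eta_0)=c$ for some $\eta_0\in H^0(Q)$, which requires $c_A=c_B=0$, i.e.\ $i=j=1$. Step (c) requires computing the restriction maps $H^*(\overline A)\oplus H^*(\overline B)\to H^*(Q)$, hence the cohomology of the mixed quotient $Q$; the Serre spectral sequence of $B\to Q\to\overline A$ that you invoke has local coefficients in $H^*(B;\Z_2)$ twisted by the free involution, and nothing is known about that action for an abstract $\Z_2$-space $B$. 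So the ``technical heart'' you flag is not a routine verification but the whole theorem, and it is not clear it can be pushed through in this form.

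For contrast, the paper sidesteps all of this by working in homology: it uses the equivariant join $\star\colon H_m(\overline A;\Z_2)\otimes H_n(\overline B;\Z_2)\to H_{m+n+1}(\overline{A*B};\Z_2)$ of \cite{GKPS}, computes explicitly that for $A=B=S^\infty$ the join of the two generators is the generator of $H_{m+n+1}(\RP^\infty;\Z_2)$, and then combines naturality of $\star$ under $h$ with the universal coefficient theorem over the field $\Z_2$ to convert $f^*(\sigma^{m})\neq 0$ and $g^*(\sigma^{n})\neq 0$ into $h^*(\sigma^{m+n+1})\neq 0$. I would recommend replacing your Mayer--Vietoris detection step with that homological join argument.
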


\begin{rem}
The special case of this proposition where $B = S^0$ is proven in \cite{FR}.  Presumably the general case is also known, but we could not find a reference in the literature and hence we provide a proof below.
\end{rem}

Our proof of Proposition \ref{pj} is based on a join operation in homology constructed in \cite{GKPS}.  
We summarize the construction below.  
Let $X$ and $Y$ be topological spaces.  
If $\Delta^m$ and $\Delta^n$ are the standard simplices, then there is a natural identification $\Delta^m * \Delta^n \cong \Delta^{m+n+1}$.  Thus given singular simplices $\alpha:\Delta^m \to X$ and $\beta:\Delta^n \to Y$  one can form a new singular simplex 
\[
\alpha * \beta: \Delta^{m+n+1} \to X*Y.
\]
In the following all groups have $\Z_2$-coefficients, even where this is not explicitly indicated in the notation.  
Extending linearly, there is a map on chains
\[
*: C_m(X) \tensor C_n(Y) \to C_{m+n+1}(X*Y)
\]
and since 
\[
\bd(\alpha * \beta) = (\bd\alpha) * \beta + \alpha * (\bd\beta),
\]
this descends to a map in homology 
\[
*:H_m(X)\tensor H_n(Y) \to H_{m+n+1}(X*Y). 
\]
Using this, it is possible to construct an equivariant join operation on $\Z_2$-spaces.  

Suppose now that $X$ and $Y$ are $\Z_2$-spaces and recall that $\overline X = X/\Z_2$ and $\overline Y = Y/\Z_2$.   
Let $g$ be the antipodal map on some $\Z_2$-space $Z$.  Then a chain $c$ in $Z$ is called $\Z_2$-equivariant provided $g_\# c = c$.  Following \cite{GKPS}, let $C_*(Z)^{\Z_2}$ denote the set of $\Z_2$-equivariant chains in $Z$.  There is a natural identification 
\[
C_*(\overline Z) \leftrightarrow C_*(Z)^{\Z_2}
\] 
given by sending $\alpha:\Delta^m \to \overline Z$ to the sum of its two lifts to $Z$.  Since the join operation respects equivariance, there is an induced operation  
\begin{gather*}
*: C_m(X)^{\Z_2} \tensor C_n(Y)^{\Z_2} \to C_{m+n+1}(X * Y)^{\Z_2}.
\end{gather*}
Using the above equivalence, this gives an operation 
\[
\star:C_m(\overline X) \tensor C_n(\overline Y) \to C_{m+n+1}(\overline {X *  Y})
\]
and again this descends to give an operation in homology
\[
\star:H_m(\overline X) \tensor H_n(\overline Y) \to H_{m+n+1}(\overline {X*  Y}).
\]
It is possible to compute this map in the case where $X= S^\infty$ and $Y=S^\infty$.  

Take $X = S^\infty$ and $Y = S^\infty$.     Note that $\overline X$ and $\overline Y$ are both homeomorphic to $\RP^\infty$ and hence $H_*(\overline X) \cong H_*(\overline Y) \cong \Z_2[x]$ in the sense of additive groups.  Now think of $S^\infty$ as the set of points $(x_i)_{i\in \N}$ in $\R^\infty$ such that 
\[
\sum_{i=1}^\infty x_i^2 = 1
\]
and all but finitely many $x_i$ are equal to 0.  There is a homeomorphism $j:X * Y \to S^\infty$ given by 
\[
j((x_i),(y_i),t) = (x_1\sqrt t , y_1\sqrt{1-t},x_2\sqrt t,y_2\sqrt{1-t},\hdots).
\]
This gives an identification of $X*Y$ with $S^\infty$ and hence $H_*(\overline {X*Y})$ is also isomorphic to $\Z_2[x]$.  

Fix a pair of non-negative integers $m$ and $n$.  Let $\mathcal X$ be the set of points $(x_i)$ in $X$ such that $x_i = 0$ for all $i> m+1$, and let $\mathcal Y$ be the set of points $(y_i)$ in $Y$ such that $y_i = 0$ for all $i > n+1$.  Let 
\[
\overline {\mathcal X} \subset  \overline X, \quad 
\overline {\mathcal Y} \subset \overline Y
\]
be the quotients.  Then $[\overline {\mathcal X}]$ is the non-trivial element of $H_m(\overline X)$ and $[\overline {\mathcal Y}]$ is the non-trivial element of $H_n(\overline Y)$.  Let $\mathcal Z$ be the set of points $(z_i)$ in $S^\infty$ such that $z_{2i-1} = 0$ for all $i > m+1$ and $z_{2j} = 0$ for all $j > n+1$.  
Then from the definition of the operation $\star$, it follows that 
\[
j_*([\overline{\mathcal X}]\star [\overline {\mathcal Y}]) = [\overline {\mathcal Z}].
\]
Since $j_*$ is an isomorphism, this means that $[\overline{\mathcal X}]\star [\overline {\mathcal Y}]$ is the non-trivial class in $H_{m+n+1}(\overline {X*  Y})$.


\begin{proof}(Proposition \ref{pj})
Let $\overline A = A/\Z_2$ and $\overline B = B/\Z_2$ and let 
\[
f:\overline A \to \RP^\infty, \quad g:\overline B\to \RP^\infty
\]
be classifying maps for the bundles $A\to\overline A$, $B\to \overline B$.  Recall that there is a homeomorphism $j:S^\infty * S^\infty \to S^\infty$.
Since $f,g$ are classifying maps, we get the following commutative diagrams.
\begin{center}
\begin{tikzpicture}
\node(1) at (0,0) {$A$};
\node(2) at (0,-1.5) {$\overline A$};
\node(3) at (2,0) {$S^\infty$};
\node(4) at (2,-1.5) {$\RP^\infty$};

\draw[->] (1)->(2);
\draw[->] (3)->(4);
\draw[->] (1)->node[above]{$\widetilde f$} (3);
\draw[->] (2)->node[below]{$f$}(4); 

\node(5) at (4,0) {$B$};
\node(6) at (4,-1.5) {$\overline B$};
\node(7) at (6,0) {$S^\infty$};
\node(8) at (6,-1.5) {$\RP^\infty$};

\draw[->] (5)->(6);
\draw[->] (7)->(8);
\draw[->] (5)->node[above]{$\widetilde g$} (7);
\draw[->] (6)->node[below]{$g$}(8); 
\end{tikzpicture}
\end{center}
Thus it is possible to define a map $\widetilde h = \widetilde f * \widetilde g:A*B\to S^\infty*S^\infty$.
Moreover $\widetilde h$ is odd and so it induces a map 
\[
h:\overline{A*B} \to \overline {S^\infty * S^\infty} \cong \RP^\infty.
\]
This is a classifying map for the bundle $A*B\to \overline {A*B}$.

Since $\Z_2$ is a field, the universal coefficient theorem implies that 
\[
H^i(X;\Z_2) \cong \Hom(H_i(X;\Z_2),\Z_2).
\]
Now assume $m+1 \le \ind(A)$ and $n+1\le \ind(B)$.  Then $f^*(\sigma^{m})$ and $g^*(\sigma^{n})$ are non-zero and hence there exist classes 
\[
\alpha\in H_{m}(\overline A;\Z_2), \quad \beta\in H_{n}(\overline B;\Z_2)
\]
such that $f_*\alpha$ and $g_*\beta$ are non-zero.  Define $\gamma = \alpha \star \beta \in H_{m+n+1}(\overline {A * B})$.  By naturality of the above construction, it follows that 
\[
h_*(\gamma) = (f_*\alpha)\star (g_*\beta), \quad \text{in } H_{m+n+1}(\overline{S^\infty *S^\infty}).
\]
But we know the class on the right hand side of the above equation is non-zero and therefore $h^*(\sigma^{m+n+1})$ is non-zero.  The result follows.
\end{proof}

\section{The Dirichlet Domain Monotonicity Inequality}
\label{dirin}

Assume that $V,W$ are disjoint open sets with $V,W\subseteq U$.  Recall that when $p=2$ the Dirichlet eigenvalues of the Laplacian satisfy a domain monotonicity inequality of the form
\[
N^0_U(\lambda) \ge N^0_{V}(\lambda) + N^0_W(\lambda). 
\]
We want to show that a similar inequality holds for arbitrary $p$.  The arguments in this section closely follow Gromov in \cite{G}.  The first step is to prove an inequality relating the energy of functions $v$ on $V$ and $w$ on $W$ with the energy of $v+w$ on $U$.

\begin{lem}[See \cite{G} Lemma 3.2.A]
\label{ei}
Assume $V,W$ are disjoint open sets with $V,W \subset U$.  Let $v\in W^{1,p}_0(V)$ and $w\in W^{1,p}_0(W)$ not both zero and define $u := v+w \in W^{1,p}_0(U)$.  Then $E(u) \le \max\{E(v),E(w)\}$.  
\end{lem}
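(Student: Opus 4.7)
The plan is to reduce the inequality to an elementary mediant estimate, exploiting the disjointness of the supports. I would start by extending $v$ and $w$ by zero to $U$; since $v \in W^{1,p}_0(V)$ and $w \in W^{1,p}_0(W)$, the extensions lie in $W^{1,p}_0(U)$ and their weak gradients vanish almost everywhere on $U \setminus V$ and $U \setminus W$ respectively. Because $V \cap W = \emptyset$, at a.e.\ point of $U$ at most one of the two functions is non-zero and at most one of the two gradients is non-zero. Therefore
\[
|u|^p = |v|^p + |w|^p, \qquad |\nabla u|^p = |\nabla v|^p + |\nabla w|^p
\]
almost everywhere on $U$, and integrating gives
\[
E(u) = \frac{\int_U |\nabla v|^p + \int_U |\nabla w|^p}{\int_U |v|^p + \int_U |w|^p}.
\]

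Setting $a = \int_U |\nabla v|^p$, $b = \int_U |\nabla w|^p$, $c = \int_U |v|^p$, $d = \int_U |w|^p$, so that $E(u) = (a+b)/(c+d)$, it remains to show that $(a+b)/(c+d) \le \max\{a/c, b/d\}$. If exactly one of $v, w$ vanishes — say $v = 0$, which forces $a = c = 0$ — then $E(u) = b/d = E(w)$ and the claim is immediate. Otherwise $c, d > 0$, and the mediant inequality is elementary: assuming without loss of generality $a/c \le b/d$, cross-multiplication yields $ad \le bc$, hence $d(a+b) \le b(c+d)$ and $(a+b)/(c+d) \le b/d$, which gives the desired bound.

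The only point that needs care — though it is standard — is the assertion that the zero extension of a $W^{1,p}_0$ function across the boundary of its domain has weak gradient equal to the zero extension of the original gradient; this is what guarantees the a.e.\ identity for $|\nabla u|^p$ above. Granting this Sobolev-extension fact, the argument is purely algebraic and the mediant step is the entire content, so I do not anticipate any real obstacle.
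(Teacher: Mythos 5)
Your proof is correct and follows essentially the same route as the paper: split the integrals using disjointness of supports and apply the mediant inequality $(a+b)/(c+d)\le\max\{a/c,b/d\}$. You are a bit more careful than the paper in justifying that the zero extensions have the expected weak gradients so that the energies genuinely add, but the substance is identical.
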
 

\begin{proof}
This is obvious if either $v \equiv 0$ or $w \equiv 0$, so we may assume that $v$ and $w$ are not identically 0.  Without loss of generality, we may also assume that $E(w) \le E(v)$.  Define the numbers 
\begin{align*}
a := \int_V \vert \grad v\vert^p, \quad b:= \int_W \vert \grad w\vert^p, \quad 
c := \int_V \vert v\vert^p, \quad d:= \int_W \vert w\vert^p.  
\end{align*}
Elementary manipulations show that 
\[
\frac {a+b}{c+d} \le \frac{a}{c} \iff \frac{b}{d} \le \frac a c,
\]
and the inequality on the right holds since $E(w) \le E(v)$.  It follows that 
\[
E(u) = \frac {a+b}{c+d} \le \frac{a}{c} = E(v).
\]
and the lemma is proven.
\end{proof}

The next step is to prove a monotonicity inequality.  

\begin{prop}[See \cite{G} 3.2.A$_1$]
\label{sa}
Assume $V,W$ are disjoint open sets with $V,W\subset U$.  Then we have $N_U^0(\lambda) \ge N^0_V(\lambda) + N^0_W(\lambda)$.  
\end{prop}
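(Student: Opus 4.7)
The plan is to apply Proposition \ref{pj} to combine the Dirichlet spectra on $V$ and $W$ via a join construction. Set
\[
A_V := E^{-1}[0,\lambda) \cap \mathcal M^0(V), \qquad A_W := E^{-1}[0,\lambda) \cap \mathcal M^0(W),
\]
so that $\ind(A_V) = N^0_V(\lambda)$ and $\ind(A_W) = N^0_W(\lambda)$ by definition of the counting function. Proposition \ref{pj} then yields $\ind(A_V * A_W) \ge N^0_V(\lambda) + N^0_W(\lambda)$. I will produce an odd continuous map $\Phi : A_V * A_W \to E^{-1}[0,\lambda) \cap \mathcal M^0(U)$; Proposition \ref{st}(i) will then give $\ind(A_V * A_W) \le N^0_U(\lambda)$, and the proposition will follow.

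For $\Phi$, extending $v \in W^{1,p}_0(V)$ and $w \in W^{1,p}_0(W)$ by zero to all of $U$, I propose
\[
\Phi([v, w, t]) := t^{1/p} v + (1-t)^{1/p} w \in W^{1,p}_0(U).
\]
The lift $\widetilde \Phi(v,w,t) := t^{1/p} v + (1-t)^{1/p} w$ on $A_V \times A_W \times [0,1]$ is continuous and respects the join identifications, since at $t=0$ the $v$-summand vanishes and at $t=1$ the $w$-summand vanishes, so $\Phi$ descends to a continuous map by the universal property of the quotient. The formula is manifestly odd. Since $\nabla v$ and $\nabla w$ have disjoint supports,
\[
\int_U |\nabla \Phi|^p = t \int_V |\nabla v|^p + (1-t) \int_W |\nabla w|^p = t + (1-t) = 1,
\]
so $\Phi$ indeed takes values in $\mathcal M^0(U)$.

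To finish, apply Lemma \ref{ei} to the pair $t^{1/p} v$ and $(1-t)^{1/p} w$, which are never simultaneously zero since $v \in \mathcal M^0(V)$ and $w \in \mathcal M^0(W)$. Combined with the positive-scaling invariance of $E$ this yields
\[
E(\Phi([v,w,t])) \le \max\{ E(v), E(w) \} < \lambda,
\]
so $\Phi$ lands in $E^{-1}[0,\lambda)$, as required. The main obstacle is choosing the right interpolation: it has to be odd, respect the join identifications, land on the unit gradient sphere, and have energy controlled by $\max\{E(v), E(w)\}$. The form $t^{1/p} v + (1-t)^{1/p} w$ is essentially forced on us by the additivity of $\int |\nabla \cdot|^p$ across disjoint supports, and once it is in place the remaining verifications are a direct combination of Propositions \ref{pj} and \ref{st} with Lemma \ref{ei}.
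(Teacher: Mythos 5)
Your proof is correct and follows essentially the same route as the paper: embed the join $A_V * A_W$ into the sublevel set on $U$, then combine Proposition \ref{pj} with Proposition \ref{st}(i) and Lemma \ref{ei}. Your interpolation $t^{1/p}v + (1-t)^{1/p}w$ is in fact slightly cleaner than the paper's $tv+(1-t)w$, since it lands exactly on $\mathcal M^0(U)$ without any normalization; otherwise the arguments are identical.
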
 

\begin{proof}
Define the sets 
\begin{gather*}
A := \{v\in \mathcal M^0(V):\, E(v) < \lambda\},\\
B := \{w\in \mathcal M^0(W):\, E(w) < \lambda\},\\
C := \{u\in \mathcal M^0(U):\, E(u) < \lambda\}.
\end{gather*} 
Then $N^0_V(\lambda) = \ind(A)$, $N^0_W(\lambda) = \ind(B)$,  $N^0_U(\lambda) = \ind(C)$ and hence we must verify that 
\[
\ind(A) + \ind(B) \le \ind(C).
\]
Consider the join $A* B$.   The map $A*B \to \mathcal M^0(U)$ given by
\[
(v,w,t) \mapsto tv + (1-t)w 
\]
is a homeomorphism onto its image.  Hence $A*B$ can be viewed as a subset of $\mathcal M^0(U)$.    
Lemma \ref{ei} shows that actually $A* B \subset C$ and thus 
\[
\ind(A* B) \le \ind(C)
\]
by Proposition \ref{st}(i).  
But Proposition \ref{pj} says 
\[
\ind(A) + \ind(B) \le \ind(A*B).
\]
Combining these inequalities gives $\ind(A) + \ind(B) \le \ind(C)$, as needed.
\end{proof}

It is possible to be slightly more general.    Given $U$ and a positive real number $a$, let $aU$ be the set $\{ax:\, x\in U\}$.  Then the scaling properties of the energy lead to a relationship between $N^0_U(\lambda)$ and $N^0_{aU}(\lambda)$.  This is the content of the following proposition.  

\begin{prop}
\label{es}
Let $a$ be a positive real number.  Then $N^0_{aU}(\lambda) = N^0_U(a^p\lambda)$.   
\end{prop}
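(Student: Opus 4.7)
The plan is to construct a scaling homeomorphism between the constrained manifolds $\mathcal M^0(U)$ and $\mathcal M^0(aU)$ that rescales the energy by a factor of $a^{-p}$, and then to invoke the homeomorphism invariance of the cohomological index (Proposition \ref{st}(i)).

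First I would define, for $u\in W^{1,p}_0(U)$, its pullback $\tilde u\in W^{1,p}_0(aU)$ by $\tilde u(x) := u(x/a)$. A standard change of variables yields
\[
\int_{aU}|\grad \tilde u|^p = a^{n-p}\int_U |\grad u|^p, \qquad \int_{aU}|\tilde u|^p = a^n\int_U |u|^p,
\]
so that $E(\tilde u) = a^{-p} E(u)$. Because both $\|\grad \cdot\|_{L^p}^p$ and $E$ are positively homogeneous of degree $p$ and degree $0$ respectively, the map
\[
\Phi\colon \mathcal M^0(U) \to \mathcal M^0(aU), \qquad \Phi(u) := \tilde u/\|\grad \tilde u\|_{L^p(aU)}
\]
is well-defined, continuous, odd (it commutes with the antipodal $\Z_2$-action), and satisfies $E(\Phi(u)) = a^{-p} E(u)$. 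The inverse map, given by $v\mapsto v(a\,\cdot)/\|\grad v(a\,\cdot)\|_{L^p(U)}$, is also continuous and odd, so $\Phi$ is in fact an equivariant homeomorphism.

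Now set $A := \{u\in\mathcal M^0(U):\,E(u)<a^p\lambda\}$ and $B := \{v\in\mathcal M^0(aU):\,E(v)<\lambda\}$, so by definition $N^0_U(a^p\lambda) = \ind(A)$ and $N^0_{aU}(\lambda) = \ind(B)$. The energy identity $E(\Phi(u)) = a^{-p}E(u)$ shows that $\Phi$ restricts to a homeomorphism $A\to B$. Applying Proposition \ref{st}(i) to $\Phi$ and to $\Phi^{-1}$ gives $\ind(A)\le \ind(B)$ and $\ind(B)\le \ind(A)$, hence $N^0_{aU}(\lambda) = N^0_U(a^p\lambda)$.

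There is no real obstacle here beyond being careful with the bookkeeping: one has to verify that $\Phi$ lands in $\mathcal M^0(aU)$ (which is why the normalization by $\|\grad \tilde u\|_{L^p}$ is needed), and to check that it is continuous with respect to the $W^{1,p}_0$-norm topology. Both points are immediate once one notes that the scaling $u \mapsto \tilde u$ is a bounded linear isomorphism $W^{1,p}_0(U)\to W^{1,p}_0(aU)$ and that renormalizing by a continuous non-vanishing functional preserves continuity.
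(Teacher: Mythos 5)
Your proof is correct and follows essentially the same route as the paper: both use the dilation $u\mapsto u(\cdot/a)$ to induce an equivariant homeomorphism $\mathcal M^0(U)\cong\mathcal M^0(aU)$ under which $E$ scales by $a^{-p}$, and then conclude by the homeomorphism invariance of the index. Your version is slightly more explicit about the renormalization needed to land in $\mathcal M^0(aU)$, but the argument is the same.
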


\begin{proof}
Define a map $g:W^{1,p}_0(U)\setminus\{0\} \to W^{1,p}_0(aU)\setminus\{0\}$ by 
\[
(gu)(x) = u(x/a).  
\]
This map induces a homeomorphism 
\[
g:\mathcal M^0(U) \cong \mathcal M^0(aU).
\]
Thus $\ind(A) = \ind(g(A))$ for every symmetric $A\subset \mathcal M^0(U)$.  Moreover, a straightforward calculation shows that 
\[
E(gu) = a^{-p}E(u)
\]
for every $u\in \mathcal M^0(U)$.  The result follows.
\end{proof}

Now suppose that $U,U_1,\hdots,U_m$ are open sets in $\R^n$ and that $a_1,\hdots,a_m$ are positive real numbers.  Following Gromov \cite{G}, we will write 
\[
\sum_{i=1}^m a_iU_i \prec U
\]
if there exist elements $b_1,\hdots,b_m\in \R^n$ such that the translates $a_iU_i+b_i$ are all disjoint and contained in $U$.  Using Proposition \ref{sa} and Proposition \ref{es} and induction shows that 
\begin{equation}
\label{ddm}
\sum_{i=1}^m a_iU_i \prec U \implies N^0_U(\lambda) \ge \sum_{i=1}^m N_{U_i}^0(a_i^p\lambda).
\end{equation}
We will refer to this as the Dirichlet domain monotonicity inequality.

\section{The Weyl Law for Dirichlet eigenvalues} \label{DWL}

In this section we prove the Dirichlet Weyl law for domains in $\R^n$. The first step is to prove the Weyl law for a cube.  The argument is essentially the same as the proof of Lemma 3.3 in \cite{LMN}.   Also see the Trivial Lemma in Section 3.4 of \cite{G}.

\begin{lem}
Let $C$ be the unit cube in $\R^n$ and define $f(\lambda) = \lambda^{-n/p} N^0_C(\lambda)$.  Then $f$ tends to a limit as $\lambda \to \infty$.    
\end{lem}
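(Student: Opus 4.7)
The plan is to extract the limit from a super-multiplicative inequality that follows directly from the Dirichlet domain monotonicity inequality (\ref{ddm}) together with the scaling relation in Proposition \ref{es}, in the spirit of Fekete's lemma.

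First, I would set up the super-multiplicative estimate. Fix $a\in (0,1]$ and let $m:=\lfloor 1/a\rfloor^n$. One can translate $m$ disjoint copies of the scaled cube $aC$ into the unit cube $C$, so that $\sum_{i=1}^{m} aC \prec C$. Applying (\ref{ddm}) with $U=U_i=C$ and $a_i=a$ yields
\[
N^0_C(\lambda) \;\ge\; \lfloor 1/a\rfloor^n \, N^0_C(a^p\lambda).
\]
Given any fixed $\mu>0$, I then choose $a = (\mu/\lambda)^{1/p}$ for $\lambda \ge \mu$, so that $a^p\lambda = \mu$. Dividing the resulting inequality by $\lambda^{n/p}$ and rewriting,
\[
f(\lambda) \;\ge\; \frac{\lfloor (\lambda/\mu)^{1/p}\rfloor^n}{(\lambda/\mu)^{n/p}} \, f(\mu).
\]

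Next I would pass to the limit. For fixed $\mu$, the prefactor tends to $1$ as $\lambda\to\infty$, so
\[
\liminf_{\lambda\to\infty} f(\lambda) \;\ge\; f(\mu).
\]
Since this holds for every $\mu>0$, taking the supremum gives
\[
\liminf_{\lambda\to\infty} f(\lambda) \;\ge\; \sup_{\mu>0} f(\mu) \;\ge\; \limsup_{\lambda\to\infty} f(\lambda),
\]
so $\lim_{\lambda\to\infty} f(\lambda)$ exists in $[0,\infty]$. Finally, Friedlander's upper bound $N^0_C(\lambda) \le C_2 \lambda^{n/p}$ (cited in the introduction) forces $f$ to be bounded above by $C_2$, so the limit is finite.

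I do not anticipate a serious obstacle. The only slightly delicate points are the geometric packing step — which is trivial for cubes — and the control of the floor-function error in the prefactor, which is routine. The conceptual content is entirely contained in (\ref{ddm}) and the scaling identity; everything else is a Fekete-style extraction of the limit.
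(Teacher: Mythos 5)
Your proposal is correct and is essentially the paper's argument: both pack $\lfloor 1/a\rfloor^n$ translates of $aC$ into $C$, apply the Dirichlet domain monotonicity inequality (\ref{ddm}) with the scaling of Proposition \ref{es}, and use that the packing prefactor tends to $1$. The only difference is organizational --- you extract the limit Fekete-style via $\liminf_{\lambda\to\infty} f(\lambda) \ge \sup_{\mu>0} f(\mu) \ge \limsup_{\lambda\to\infty} f(\lambda)$, whereas the paper runs the same estimate along two extremal subsequences; your handling of finiteness via Friedlander's upper bound matches the paper's remark immediately following the lemma.
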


\begin{proof}
Choose sequences $(\lambda_j)$, $(\mu_k)$ so that 
\[
\limsup_{\lambda\to \infty} f(\lambda) = \lim_{j\to \infty} f(\lambda_j), \quad \liminf_{\lambda \to \infty} f(\lambda) = \lim_{k\to\infty} f(\mu_k).
\]
Now fix some $j$ and consider $k$ large.  Let $M_k$ be the largest integer such that it is possible to pack $M_k$ disjoint open cubes of volume $(\lambda_j/\mu_k)^{n/p}$ into $C$.  Then  
\[
\sum_{\ell=1}^{M_k} ({\lambda_j}/ {\mu_k})^{1/p} C \prec C
\]
and it follows by the domain monotonicity inequality (\ref{ddm}) that 
\[
N_C^0(\lambda) \ge M_k N_C^0(\lambda_j\lambda/\mu_k).
\]
Choose $\lambda = \mu_k$ and multiply both sides by $\mu_k^{-n/p}$ to get
\[
\mu_k^{-n/p} N_C^0(\mu_k) \ge M_k \mu_k^{-n/p} N_C^0(\lambda_j).
\]
Now let $k\to \infty$ and use the fact that $M_k\mu_k^{-n/p} \to \lambda_j^{-n/p}$ to get 
\[
\liminf_{\lambda\to \infty} f(\lambda) \ge \lambda_j^{-n/p} N_C^0(\lambda_j).
\]
Finally let $j\to \infty$ to get 
\[
\liminf_{\lambda \to \infty} f(\lambda) \ge \limsup_{\lambda \to \infty} f(\lambda).
\]
The result follows.
\end{proof}

Define $c^0 = \lim_{\lambda\to\infty} f(\lambda)$.  
As mentioned in Section 1, estimates of Friedlander \cite{F} imply that 
\[
C_1\lambda^{n/p} \le N_C^0(\lambda) \le C_2\lambda^{n/p}, \quad \text{as } \lambda\to\infty.
\]
It follows at once that $0 < c^0 < \infty$.  Thus the Weyl law holds on the unit cube, i.e., we have $N_C^0(\lambda) \sim c^0\lambda^{n/p}$. 

It is also possible to give a self-contained proof that $0 < c^0 < \infty$ which does not rely on the estimates of Friedlander.  The next proposition shows that $c^0 > 0$.  We will give the proof that $c^0 < \infty$ in a later section.

\begin{prop}
Put $c^0 = \lim_{\lambda\to\infty} \lambda^{-n/p}N^0_C(\lambda)$.  Then $c^0 > 0$.
\end{prop}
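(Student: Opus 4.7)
The plan is to show that for all sufficiently large $\lambda$ the counting function satisfies $N_C^0(\lambda) \ge c \lambda^{n/p}$ for some constant $c > 0$, which combined with the preceding lemma forces $c^0 \ge c > 0$. The strategy is to exhibit a symmetric set $A \subset \mathcal{M}^0(C)$, contained in the sublevel $E^{-1}[0,\lambda)$, of cohomological index $\gtrsim \lambda^{n/p}$. The natural candidate for $A$ is a sphere of disjointly supported, identically rescaled bump functions, for which the equality case of Lemma \ref{ei} pins down the energies exactly.

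First I would fix any non-zero $\phi \in W^{1,p}_0(C)$ and set $E_0 := E(\phi) > 0$. For a positive integer $k$ I partition $C$ into $k^n$ subcubes $C_1, \ldots, C_{k^n}$ of side length $1/k$, and inside each $C_i$ I place the translated $(1/k)$-rescaling $\phi_i$ of $\phi$, normalized so that $\|\grad \phi_i\|_{L^p(C)} = 1$. The scaling identity in the proof of Proposition \ref{es} gives $E(\phi_i) = k^p E_0$ for every $i$.

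Next I would exploit the disjoint supports to show that the odd continuous map
\[
\iota\colon S^{k^n-1} \longrightarrow \mathcal{M}^0(C), \qquad (t_1,\ldots,t_{k^n}) \longmapsto \sum_i t_i\,\phi_i,
\]
defined on the $\ell^p$-unit sphere in $\R^{k^n}$, is an embedding whose image $A$ sits on the single level set $\{E = k^p E_0\}$: disjoint supports force both $\int |\grad u|^p$ and $\int |u|^p$ to split as sums over $i$, and the common values $E(\phi_i) = k^p E_0$ make the ratio independent of $(t_i)$. Proposition \ref{st}(i) then yields $\ind(A) \ge \ind(S^{k^n-1})$, and iterating Proposition \ref{pj} along the identification $S^{k^n-1} \cong S^0 * \cdots * S^0$ ($k^n$ copies, each of index one) gives $\ind(S^{k^n-1}) \ge k^n$.

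Finally I would choose $k$ to be the largest positive integer with $k^p E_0 < \lambda$, so that $A \subset E^{-1}[0,\lambda)$ and $k \sim (\lambda/E_0)^{1/p}$ as $\lambda \to \infty$. This gives $N_C^0(\lambda) \ge \ind(A) \ge k^n$, and dividing by $\lambda^{n/p}$ and passing to the limit yields $c^0 \ge E_0^{-n/p} > 0$. I do not anticipate a real obstacle; the only point that might look delicate is the identity $E(u) = k^p E_0$ for every $u$ on the sphere, but this is the elementary computation indicated above, and everything else is a direct assembly of tools already developed in the paper.
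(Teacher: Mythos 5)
Your proposal is correct, and it takes a noticeably more direct route than the paper. The paper's proof applies the Dirichlet domain monotonicity inequality to the subdivision of $C$ into $k^n$ subcubes to get the self-improvement estimate $N^0_C(\lambda) \ge k^n N^0_C(k^{-p}\lambda)$, and then closes the argument with two auxiliary facts: that $N^0_C \ge 1$ beyond some $\lambda_1$, and that the intervals $[k^p\lambda_1, k^p 2^p\lambda_1]$ cover $[\lambda_1,\infty)$, so that $f(\lambda) \ge f(k^{-p}\lambda) \ge C_1$ for a minimum $C_1$ taken over a fixed compact window. You instead unwind the monotonicity machinery down to a single explicit test function: the $\ell^p$-sphere of disjointly supported rescalings of $\phi$ sits on the exact level $k^pE_0$ (your computation is right --- disjoint supports split both integrals, and the normalizations make the ratio independent of $(t_i)$), Proposition \ref{pj} iterated over $S^{k^n-1}\cong S^0*\cdots*S^0$ gives index $k^n$, and Proposition \ref{st}(i) transports this into $E^{-1}[0,\lambda)$. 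Both arguments rest on the same topological engine (the join and the index of spheres), but yours bypasses the recursive step entirely and has the bonus of producing an explicit lower bound $c^0 \ge E(\phi)^{-n/p}$ for any admissible $\phi$, hence $c^0 \ge \lambda_1(C)^{-n/p}$ after optimizing; the paper's version only needs the monotonicity inequality as a black box and so generalizes with less bookkeeping. One small point worth making explicit in a final write-up: the map $\iota$ lands in $\mathcal M^0(C)$ precisely because you chose the $\ell^p$-sphere (so that $\|\grad(\sum t_i\phi_i)\|_{L^p}^p = \sum |t_i|^p = 1$ by disjointness of supports), and the $\ell^p$-sphere is $\Z_2$-equivariantly homeomorphic to $S^{k^n-1}$ with the antipodal action, so its index is indeed $k^n$.
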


\begin{proof}
As above let $f(\lambda) = \lambda^{-n/p}N^0_C(\lambda)$.  Fix some $\lambda > 0$ and some positive integer $k$.    It is possible to divide unit cube into $k^n$ disjoint open cubes of volume $k^{-n}$.  Hence  
\[
\sum_{\ell=1}^{k^n} k^{-1}C \prec C
\]
and so the domain monotonicity inequality implies 
\[
N^0_C(\lambda) \ge k^n N^0_C(k^{-p} \lambda).
\]
Multiplying by $\lambda^{-n/p}$ yields
\[
f(\lambda) = \lambda^{-n/p} N^0_C(\lambda) \ge (k^{-p}\lambda)^{-n/p} N^0_C(k^{-p}\lambda) = f(k^{-p}\lambda).
\]
Now there exists some $\lambda_1$ such that $N^0_C(\lambda) \ge 1$ for all $\lambda \ge \lambda_1$.  Hence $f$ has a positive minimum $C_1$ on the interval $[\lambda_1, 2^p\lambda_1]$.  Since 
\[
(k+1)^p \le (2k)^p, \quad \text{for all } k\ge 1
\]
it follows that 
\[
[\lambda_1, \infty) = \bigcup_{k=1}^\infty [k^p\lambda_1,k^p2^p\lambda_1].
\]
Therefore, given an arbitrary $\lambda\ge \lambda_1$, there exists some positive integer $k$ such that $k^{-p}\lambda \in [\lambda_1,2^p\lambda_1]$.  Using the above inequality, it follows that 
\[
f(\lambda) \ge f(k^{-p}\lambda) \ge C_1.
\]
This proves that $c^0 \ge C_1 > 0$, as needed. 
\end{proof}

Finally we derive the Weyl law for a general domain in $\R^n$. 

\begin{theorem}
Let $U\subset \R^n$ be a bounded open set with Lipschitz boundary.  Then  $N_U^0(\lambda) \sim c^0\vol(U)\lambda^{n/p}$ as $\lambda\to\infty$.
\end{theorem}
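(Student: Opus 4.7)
The plan is to sandwich $\lambda^{-n/p}N_U^0(\lambda)$ between two quantities both converging to $c^0\vol(U)$, yielding matching $\liminf$ and $\limsup$.

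\emph{Lower bound (interior packing).} Given $\eps>0$, approximate $U$ from inside by finitely many pairwise disjoint open cubes $Q_1,\dots,Q_M\subset U$ of side lengths $a_1,\dots,a_M$ with $\sum_{i=1}^{M} a_i^n \ge \vol(U)-\eps$; this is purely a measure-theoretic fact about open subsets of $\R^n$. Since each $Q_i$ is a translate of $a_iC$, the Dirichlet domain monotonicity inequality (\ref{ddm}) combined with Proposition \ref{es} gives
\[
N_U^0(\lambda) \ge \sum_{i=1}^{M} N_{Q_i}^0(\lambda) = \sum_{i=1}^{M} N_C^0(a_i^p\lambda).
\]
Dividing by $\lambda^{n/p}$ and applying the cube Weyl law termwise (with $M$ fixed as $\lambda \to \infty$) yields
\[
\liminf_{\lambda\to\infty}\lambda^{-n/p}N_U^0(\lambda) \ge c^0\sum_{i=1}^{M} a_i^n \ge c^0(\vol(U)-\eps),
\]
and $\eps$ was arbitrary.

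\emph{Upper bound (Neumann comparison).} The inequality $N_U^0(\lambda)\le N_U(\lambda)$ already recorded in Section 3 reduces matters to the upper half of a Neumann Weyl law, $\limsup_{\lambda\to\infty}\lambda^{-n/p}N_U(\lambda) \le c^0\vol(U)$. Parallel to what has been done for Dirichlet, this calls for three ingredients to be developed in subsequent sections: (a) a Neumann domain monotonicity going the opposite way, $N_U(\lambda)\le \sum_i N_{V_i}(\lambda)$ for an almost-disjoint open cover $\{V_i\}$ of $U$; (b) a Neumann cube Weyl law $N_C(\lambda)\sim c\,\lambda^{n/p}$ paralleling the Dirichlet case; and (c) the identification $c=c^0$. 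Granted these, an $\eps$-grid cover of $U$ by cubes yields the bound; grid cubes meeting $\bd U$ contribute lower order because the Lipschitz hypothesis on $\bd U$ forces $|\{x:\dist(x,\bd U)\le \eps\}| = O(\eps)$.

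The main obstacle is piece (a), the Neumann covering inequality. Unlike the Dirichlet case, where extension by zero produced an odd embedding of the join $A*B$ into $\mathcal M^0(U)$ essentially for free (Lemma \ref{ei} and Proposition \ref{sa}), a function in $W^{1,p}(U)$ does not restrict to admissible Neumann test functions on the pieces $V_i$ with controlled energy, so a partition-of-unity construction is required in order to transfer the cohomological index across the covering. Piece (c) is then a short boundary-layer argument: sandwiching $N_C^0$ and $N_C$ on cubes of slightly different sizes $(1\pm\delta)C$ forces the Dirichlet and Neumann constants on the unit cube to coincide once both Weyl laws are in hand.
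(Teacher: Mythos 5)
Your lower bound is exactly the paper's: pack pairwise disjoint cubes of almost full measure into $U$, apply the Dirichlet domain monotonicity inequality (\ref{ddm}) together with the cube Weyl law, and let $\eps\to 0$. That half is complete.

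The upper bound is where the proposal has a genuine gap. The paper never routes the Dirichlet upper bound through the Neumann problem; it stays entirely inside the Dirichlet theory via a complementation trick: fix $a>0$ so that a translate of $aU$ fits in the unit cube $C$, choose cubes with $aU+\sum_i a_iC\prec C$ and $\sum_i a_i^n\ge 1-a^n\vol(U)-\eps$, apply (\ref{ddm}) to get $N_C^0(\lambda)\ge N_U^0(a^p\lambda)+\sum_i N_C^0(a_i^p\lambda)$, and then use the cube Weyl law on $N_C^0(\lambda)$ and on each $N_C^0(a_i^p\lambda)$ to squeeze $\limsup_\lambda \lambda^{-n/p}N_U^0(\lambda)$ from above; only cube asymptotics are needed. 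Your route instead requires the entire Neumann apparatus --- the covering subadditivity of the index (Proposition \ref{st}(ii)), the Neumann cube Weyl law, the extension-operator growth bound of Proposition \ref{gb}, and the identity $c=c^0$ --- and the one step you describe concretely, item (c), fails as stated. Sandwiching between cubes of sizes $(1\pm\delta)$ cannot yield $c\le c^0$: to dominate $N_C(\lambda)$ by $N^0_{(1+\delta)C}(\lambda')$ you must extend a Neumann test function past $\bd C$ and cut it off, and the extension-plus-cutoff estimate only gives $E(Gu)\le K(E(u)+1)$ with $K$ bounded away from $1$ (this is precisely the computation in the proof of Proposition \ref{gb}), so you lose a fixed multiplicative constant rather than a factor $1+o(1)$. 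The actual proof of $c=c^0$ is Gromov's collar lemma (Lemma \ref{key}): split a Neumann test function into a cut-off Dirichlet part and a Neumann part supported on the $\eps$-collar, apply index subadditivity, and kill the collar contribution using Proposition \ref{gb} as the collar volume shrinks. Moreover, the paper's proof of $c=c^0$ already invokes the Dirichlet Weyl law on the ball --- an instance of the very theorem you are proving --- so your ordering would be circular unless you rerun that argument on a cube. The repair is simply to replace your upper bound by the complementation argument above.
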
 

\begin{proof}
Without loss of generality we can assume that $U$ has unit volume.  Define the numbers 
\[
\underline \beta(U) = \liminf_{\lambda\to \infty} \lambda^{-n/p} N_U^0(\lambda),\quad \overline\beta(U) = \limsup_{\lambda \to \infty} \lambda^{-n/p} N_U^0(\lambda).
\]
Let $\eps > 0$ and choose numbers $a_1,\hdots,a_m$ such that 
\[
\sum_{i=1}^m a_i C \prec U
\]
and $\sum_{i=1}^m a_i^n \ge 1 -\eps$.  Applying the domain monotonicity inequality gives 
\[
N_U^0(\lambda) \ge \sum_{i=1}^m N_C^0(a_i^p\lambda).
\]
But $N_C^0(a_i^p\lambda) \sim c^0a_i^{n}\lambda^{n/p}$ as $\lambda \to \infty$ since the Weyl law holds for cubes.  Hence multiplying both sides of the above inequality by $\lambda^{-n/p}$ and letting $\lambda\to \infty$ it follows that 
\[
\underline \beta(U) \ge c^0 \left( \sum_{i=1}^m a_i^n\right) \ge c(1-\eps).
\]
Since $\eps$ was arbitrary this implies that $\underline \beta(U) \ge c^0$.  

It remains to show that $\overline \beta(U) \le c^0$.  To this end, fix $a > 0$ so that some translate of $aU$ is contained in $C$.  Let $\eps > 0$ and choose numbers $a_1,\hdots,a_m$ so that 
\[
aU + \sum_{i=1}^m a_i C \prec C 
\]
and $\sum_{i=1}^m a_i^n \ge 1 - a^n - \eps$.  Then  
\[
N_C^0(\lambda) \ge N_U^0(a^p\lambda) + \sum_{i=1}^m N_C^0(a_i^p\lambda).
\]
Pick a sequence $\lambda_j \to \infty$ so that 
\[
a^{-n}\lambda_j^{-n/p} N_U^0(a^p\lambda_j) \to \overline \beta(U).
\]
Taking $\lambda = \lambda_j$ in the above equation and multiplying both sides by $a^{-n}\lambda_j^{-n/p}$ gives 
\[
a^{-n}\lambda_j^{-n/p}N_C^0(\lambda_j) \ge a^{-n} \lambda_j^{-n/p} N_U^0(a^p\lambda_j) + \sum_{i=1}^m a^{-n}\lambda_j^{-n/p} N_C^0(a_i^p\lambda_j).
\]
Letting $j \to \infty$ now yields
\begin{align*}
a^{-n} c^0 
&\ge \overline \beta(U) + a^{-n} \sum_{i=1}^m a_i^n c^0 \\
&\ge \overline \beta(U) + a^{-n}(1 - a^n - \eps)c^0.
\end{align*}
Therefore
\[
\overline\beta(U) \le (1+a^{-n}\eps)c^0 
\]
and hence $\overline\beta(U) \le c^0$ since $\eps$ was arbitrary. This proves that $\overline\beta(U) = \underline\beta(U) = c^0$ and so the Weyl law $N_U^0(\lambda) \sim c^0\lambda^{n/p}$ holds for $U$.  
\end{proof}

\section{The Neumann Monotonicity Inequality}
\label{neuin}

Assume that $V,W$ are open subsets of $U$ with $\cl U=\cl V\cup \cl W$ and that $\cl V\cap \cl W$ has measure 0.  Recall that when $p=2$, the Neumann eigenvalues of the Laplacian satisfy a domain monotonicity inequality of the form 
\[
N_U(\lambda) \le N_V(\lambda) + N_W(\lambda).
\]
We want to show that a similar inequality holds for arbitrary $p$.  Again the first step is to prove an energy inequality.

\begin{lem}[See \cite{G} Lemma 3.2.A]
\label{ein}
Assume $V,W$ are open subsets of $U$ with $\cl U= \cl V\cup \cl W$ and that $\cl V\cap \cl W$ has measure 0.  Let $u\in W^{1,p}(U)$ and then define $v = u|_V \in W^{1,p}(V)$ and $w = u|_W\in W^{1,p}(W)$ so that $u=v+w$.  Assume neither $v$ nor $w$ is identically zero. Then $E(u) \ge \min\{E(v),E(w)\}$. 
\end{lem}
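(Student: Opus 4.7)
The plan is to mirror the elementary manipulation used in Lemma \ref{ei}, but going in the opposite direction: whereas there we needed an upper bound on a mediant, here we need a lower bound. The only real content is the standard mediant inequality
\[
\min\left\{\frac{a}{c},\frac{b}{d}\right\} \le \frac{a+b}{c+d} \le \max\left\{\frac{a}{c},\frac{b}{d}\right\}
\]
applied to the appropriate four integrals.

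First I would use the hypotheses $\cl U = \cl V \cup \cl W$ and $|\cl V \cap \cl W| = 0$ to check that $V$ and $W$ partition $U$ up to a null set; since $V$ and $W$ are open and $V \cap W \subseteq \cl V \cap \cl W$ has measure zero, $V$ and $W$ are disjoint, and $U \setminus (V \cup W)$ is contained in a set of measure zero. This lets me split
\[
\int_U |\grad u|^p = \int_V |\grad v|^p + \int_W |\grad w|^p, \qquad \int_U |u|^p = \int_V |v|^p + \int_W |w|^p.
\]

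Next I would introduce exactly the same four numbers as in the proof of Lemma \ref{ei},
\[
a := \int_V |\grad v|^p, \quad b := \int_W |\grad w|^p, \quad c := \int_V |v|^p, \quad d := \int_W |w|^p,
\]
so that $E(u) = (a+b)/(c+d)$, $E(v) = a/c$, $E(w) = b/d$. Without loss of generality assume $E(v) \le E(w)$, i.e.\ $a/c \le b/d$. The elementary equivalence
\[
\frac{a}{c} \le \frac{a+b}{c+d} \iff \frac{a}{c} \le \frac{b}{d}
\]
then gives $E(u) \ge a/c = E(v) = \min\{E(v),E(w)\}$, which is the claim.

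There is essentially no hard step; the only minor technical point is verifying that boundary measure-zero conditions let the integrals decompose as sums over $V$ and $W$. Since elements of $W^{1,p}$ are defined only up to null sets, the hypotheses on $\cl V$ and $\cl W$ are exactly what is needed for this decomposition, and the rest is arithmetic.
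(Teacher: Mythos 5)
Your proof is correct and is essentially identical to the paper's: the same four integrals $a,b,c,d$, the same reduction to the mediant inequality $\frac{a+b}{c+d}\ge\frac{a}{c}\iff\frac{a}{c}\le\frac{b}{d}$ after assuming $E(v)\le E(w)$. The only difference is that you make explicit the (correct, and implicitly assumed in the paper) point that the hypotheses on $\cl V$ and $\cl W$ let the integrals over $U$ split as sums over $V$ and $W$.
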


\begin{proof}
Without loss of generality we can assume that $E(v) \le E(w)$.  Define the numbers 
\[
a := \int_V \vert \grad v\vert^p, \quad b := \int_W \vert \grad w\vert^p,\quad c := \int_V \vert v\vert^p,\quad d := \int_W \vert w\vert^p.
\]
Elementary manipulations show that 
\[
\frac{a+b}{c+d} \ge \frac{a}{c} \iff \frac{a}{c}\le \frac{b}{d}
\]
and the inequality on the right holds since $E(w) \ge E(v)$. 
It follows that 
\[
E(u) = \frac{a + b}{c+d} \ge \frac{a}{c} = E(v),
\]
as needed.
\end{proof}

%

The next step is a monotonicity inequality.  

\begin{prop}[See \cite{G} 3.2.A$_2$]
\label{ndm}
Assume $V,W$ are open subsets of $U$ with $\cl U = \cl V\cup \cl W$ and that $\cl V\cap \cl W$ has measure 0.  Then $N_U(\lambda) \le N_V(\lambda) + N_W(\lambda)$.
\end{prop}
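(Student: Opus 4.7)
The plan is to mirror the argument for the Dirichlet monotonicity inequality (Proposition \ref{sa}), but with the key index facts swapped: instead of the join inequality from Proposition \ref{pj}, the relevant tool will be the subadditivity of the index under open symmetric covers from Proposition \ref{st}(ii). The starting point is the sublevel set
\[
C := \{u \in \mathcal{M}(U) : E(u) < \lambda\},
\]
so that $N_U(\lambda) = \ind(C)$ by definition. The idea is to cover $C$ by two open symmetric subsets, each of which admits an odd continuous map into the corresponding Neumann sublevel set on $V$ or $W$.

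Concretely, set
\[
A := \{u \in C : u|_V \not\equiv 0 \text{ and } E(u|_V) < \lambda\}, \qquad B := \{u \in C : u|_W \not\equiv 0 \text{ and } E(u|_W) < \lambda\}.
\]
Since $\cl V \cap \cl W$ has measure zero and $\cl U = \cl V \cup \cl W$, the integrals defining $E$ split additively over $V$ and $W$. If $u \in C$ satisfies $u|_V \equiv 0$, then $\|u|_W\|_{W^{1,p}(W)} = 1$ and $E(u|_W) = E(u) < \lambda$, so $u \in B$; the case $u|_W \equiv 0$ is symmetric. Otherwise both restrictions are nonzero and Lemma \ref{ein} gives $\min\{E(u|_V), E(u|_W)\} \le E(u) < \lambda$, placing $u$ in at least one of $A$ or $B$. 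Thus $C = A \cup B$. The sets $A$ and $B$ are symmetric and open: the locus $\{u : u|_V \not\equiv 0\}$ is the complement of a closed linear subspace of $W^{1,p}(U)$, and $u \mapsto E(u|_V)$ is continuous on this locus since the restriction $W^{1,p}(U) \to W^{1,p}(V)$ is bounded.

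To estimate the indices I would use the odd continuous normalizations
\[
f_A(u) := \frac{u|_V}{\|u|_V\|_{W^{1,p}(V)}}, \qquad f_B(u) := \frac{u|_W}{\|u|_W\|_{W^{1,p}(W)}},
\]
which map $A$ and $B$ into $X_V := \{v \in \mathcal{M}(V) : E(v) < \lambda\}$ and $X_W$ respectively, using the scale invariance $E(tu) = E(u)$ for $t > 0$. By Proposition \ref{st}(i), $\ind(A) \le \ind(X_V) = N_V(\lambda)$ and $\ind(B) \le N_W(\lambda)$, and Proposition \ref{st}(ii) then yields
\[
N_U(\lambda) \;=\; \ind(C) \;\le\; \ind(A) + \ind(B) \;\le\; N_V(\lambda) + N_W(\lambda).
\]
The main delicate point, and essentially the only obstacle beyond bookkeeping, is the edge case in which one of the restrictions vanishes identically: this forces the clauses $u|_V \not\equiv 0$ and $u|_W \not\equiv 0$ into the definitions of $A$ and $B$ in order that $f_A$ and $f_B$ be well defined, and one must verify that the vanishing cases are nonetheless absorbed by the \emph{other} set in the cover. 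This is exactly where the hypothesis $\cl U = \cl V \cup \cl W$ plays its role, ensuring $u \in \mathcal{M}(U)$ with $u|_V \equiv 0$ has $u|_W \not\equiv 0$ and $E(u|_W) = E(u)$.
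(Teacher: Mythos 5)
Your proof is correct and follows essentially the same route as the paper's: cover the sublevel set $C$ by the two open symmetric sets where the restriction to $V$ (resp.\ $W$) is nonzero with energy below $\lambda$, apply the subadditivity of the index from Proposition \ref{st}(ii), and push each piece into the corresponding Neumann sublevel set via the odd normalization map and Proposition \ref{st}(i). Your explicit handling of the edge case $u|_V \equiv 0$ is a welcome bit of extra care that the paper leaves implicit.
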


\begin{proof}
Define the sets 
\begin{gather*}
A := \{v\in  \mathcal M(V):\, E(v) < \lambda\},\\
B := \{w\in  \mathcal M(W):\, E(w) < \lambda\},\\
C := \{u\in  \mathcal M(U):\, E(u) < \lambda\}.
\end{gather*}
Then $N_U(\lambda) = \ind(C)$ and $N_V(\lambda) + N_W(\lambda) = \ind(A) + \ind(B)$.  Hence it is enough to show that
\[
\ind(C) \le \ind(A) + \ind(B).
\]
To see this, define the sets 
\begin{gather*}
C_A := \{u\in C:\, u\vert_V \not\equiv 0 \text{ and } E(u\vert_V) < \lambda \},\\
C_B := \{u\in C:\, u\vert_W \not\equiv 0 \text{ and } E(u\vert_W) < \lambda \}.
\end{gather*}
Lemma \ref{ein} shows that $C \subset C_A \cup C_B$.  Moreover, the sets $C_A$ and $C_B$ are open in $C$ and hence $\ind(C) \le \ind(C_A) + \ind(C_B)$ by Proposition \ref{st}(ii).  But there is an odd continuous map
\begin{gather*}
C_A  \to A,\quad u \mapsto \frac{u\vert_V}{\|u\vert_V\|}
\end{gather*}
and therefore $\ind(C_A) \le \ind(A)$ by Proposition \ref{st}(i).  The same argument shows that  $\ind(C_B)\le \ind(B)$ and thus $\ind(C) \le \ind(A) + \ind(B)$.  The result follows.
\end{proof}

As in the Dirichlet case, this can be combined with the scaling properties of the energy to give a more general monotonicity inequality.  Suppose $U,U_1,\hdots,U_n$ are open in $\R^n$ and $a_1,\hdots,a_m$ are positive real numbers.  Then we will write 
\[
\sum_{i=1}^m a_iU_i \approx U
\]
if there exist elements $b_1,\hdots,b_n\in \R^n$ such that
\[
\cl U = \bigcup_{i=1}^m (\cl{a_iU + b_i})
\]
and each intersection $(\cl{a_iU_i + b_i})\cap (\cl{a_jU_j + b_j})$ has measure zero.  Using Proposition \ref{ndm}, the scaling properties of the energy, and induction shows that 
\begin{equation}
\label{ndm}
\sum_{i=1}^m a_iU_i \approx U \implies N_U(\lambda) \le \sum_{i=1}^m N_{U_i}(a_i^p\lambda).
\end{equation}
We will refer to this as the Neumann domain monotonicity inequality.

%

\section{The Weyl Law for Neumann Eigenvalues}

In this section we prove the Neumann Weyl law for domains in $\R^n$.  The first step is to prove the Weyl law on a cube. 
\begin{lem}
Let $C$ be the unit cube in $\R^n$ and define $g(\lambda) = \lambda^{-n/p}N_C(\lambda)$.  Then $g$ tends to a limit as $\lambda\to \infty$.  
\end{lem}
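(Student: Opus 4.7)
The plan is to mirror the Dirichlet argument from the previous section but with the direction of the monotonicity inequality reversed. In the Dirichlet case one obtained a lower bound $N_C^0(\lambda) \geq M_k N_C^0(\lambda_j\lambda/\mu_k)$ by packing small cubes into $C$; here the Neumann inequality (\ref{ndm}) instead gives an upper bound coming from partitioning $C$ into small cubes. So where the Dirichlet proof shows $\liminf \geq \limsup$ by evaluating along a sequence realizing $\limsup$, I will show $\limsup \leq \liminf$ by evaluating along a sequence realizing $\limsup$.

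Concretely, choose sequences $\lambda_j, \mu_k \to \infty$ with $g(\lambda_j) \to \liminf_{\lambda \to \infty} g(\lambda)$ and $g(\mu_k) \to \limsup_{\lambda \to \infty} g(\lambda)$. Fix $j$, and for large $k$ set $r_k = \lceil (\mu_k/\lambda_j)^{1/p} \rceil$, so that $r_k^p \lambda_j \geq \mu_k$ and $r_k^p \lambda_j / \mu_k \to 1$ as $k \to \infty$. Partition the unit cube into $r_k^n$ subcubes of side $1/r_k$, giving $\sum_{\ell=1}^{r_k^n} (1/r_k) C \approx C$, and apply the Neumann domain monotonicity inequality (\ref{ndm}) to obtain
\[
N_C(r_k^p \lambda_j) \leq r_k^n\, N_C(\lambda_j).
\]
Since $N_C(\lambda)$ is nondecreasing in $\lambda$ (the sublevel set $E^{-1}[0,\lambda)$ grows, and inclusion is an odd continuous map, so Proposition \ref{st}(i) gives monotonicity of the index), and since $\mu_k \leq r_k^p \lambda_j$, this yields
\[
N_C(\mu_k) \leq N_C(r_k^p \lambda_j) \leq r_k^n\, N_C(\lambda_j).
\]
Dividing by $\mu_k^{n/p}$ gives $g(\mu_k) \leq (r_k^p/\mu_k)^{n/p} N_C(\lambda_j)$. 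Letting $k \to \infty$, the factor $(r_k^p/\mu_k)^{n/p} \to \lambda_j^{-n/p}$, so
\[
\limsup_{\lambda \to \infty} g(\lambda) \leq \lambda_j^{-n/p} N_C(\lambda_j) = g(\lambda_j).
\]
Finally, sending $j \to \infty$ yields $\limsup g \leq \liminf g$, so the limit exists.

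I do not expect a real obstacle: the one non-cosmetic point is verifying that $N_C$ is monotone in $\lambda$, but this is immediate from the inclusion property of the cohomological index established in Proposition \ref{st}(i). The rest of the argument is a straightforward repackaging of the Dirichlet proof with the tiling relation $\approx$ in place of the packing relation $\prec$ and the inequality reversed.
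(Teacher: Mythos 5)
Your proof is correct and follows essentially the same route as the paper: tile the unit cube by congruent subcubes, apply the Neumann domain monotonicity inequality, and play the $\limsup$ sequence off against the $\liminf$ sequence. The only difference is cosmetic (and arguably cleaner): you round the number of subcubes per side up to an integer $r_k$ and invoke monotonicity of $N_C$, whereas the paper perturbs the scale by a small $\eps_j$ and checks $\eps_j\lambda_j\to 0$.
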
 

\begin{proof}
Choose sequences $(\lambda_j)$, $(\mu_k)$ so that 
\[
\limsup_{\lambda\to \infty} g(\lambda) = \lim_{j\to\infty} g(\lambda_j), \quad \liminf_{\lambda\to\infty}g(\lambda) = \lim_{k\to\infty} g(\mu_k).
\]
Now fix some $k$ and consider $j$ large.  Let $\eps_j \ge 0$ be the smallest  number such that $(\mu_k/\lambda_j + \eps_j)^{1/p}$ is the reciprocal of an integer.  Then cubes of volume $(\mu_k/\lambda_j + \eps_j)^{n/p}$ partition the cube $C$.  Let $M_j$ be the number of cubes in such a partition.  

We now estimate $\eps_j$ and $M_j$.  Define $t_j := \mu_k/\lambda_j$ and then let $\ell_j$ be the largest integer such that 
\[
\ell_j \le t_j^{-1/p}.
\]  
Then $\eps_j = \ell_j^{-p} - t_j \ge 0$.  Moreover $t_j^{-1/p} - 1 \le \ell_j \le t_j^{-1/p}$ and thus
\[
\eps_j \lambda_j \le \left((t_j^{-1/p} - 1)^{-p} - t_j\right)\frac{\mu_k}{t_j} \to 0, \quad \text{as } j\to \infty.
\]
Also notice that $M_j = \ell_j^n$ and therefore 
\[
(t_j^{-1/p} - 1)^n \left(\frac{\mu_k}{t_j}\right)^{-n/p}  \le M_j\lambda_j^{-n/p} \le  \mu_k^{-n/p}
\]
so that $M_j\lambda_j^{-n/p} \to \mu_k^{-n/p}$ as $j\to \infty$.

Given these estimates, the result can be obtained as follows.  Observe that 
\[
\sum_{\ell=1}^{M_j} (\mu_k/\lambda_j + \eps_j)^{1/p}C \approx C
\]
and hence the domain monotonicity inequality gives
\[
N_C(\lambda) \le M_jN_C(\mu_k\lambda/\lambda_j + \eps_j \lambda).
\]
Choosing $\lambda = \lambda_j$ and multiplying both sides by $\lambda_j^{-n/p}$ yields 
\[
\lambda_j^{-n/p}N_C(\lambda_j) \le M_j\lambda_j^{-n/p}N_C(\mu_k + \eps_j\lambda_j). 
\]
Letting $j\to \infty$ and using the fact that $M_j\lambda_j^{-n/p} \to \mu_k^{-n/p}$ and $\eps_j\lambda_j \to 0$ this gives 
\[
\limsup_{\lambda\to \infty} g(\lambda) \le \mu_k^{-n/p}N_C(\mu_k).
\]
Finally let $k\to \infty$ to get 
\[
\limsup_{\lambda\to\infty} g(\lambda) \le \liminf_{\lambda\to\infty} g(\lambda),
\]
as needed.
\end{proof}

Define $c = \lim_{\lambda \to \infty} g(\lambda)$.  It is obvious that $c^0\le  c$ and hence $c > 0$.  The next proposition shows that $c < \infty$.    It will be shown in a later section that actually $c^0 =  c$.

\begin{prop}
Set $c = \lim_{\lambda\to\infty} \lambda^{-n/p}N_C(\lambda)$.  Then $c < \infty$.  
\end{prop}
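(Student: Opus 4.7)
The plan is to mirror the proof that $c^0 > 0$, using the Neumann monotonicity inequality (\ref{ndm}) in place of the Dirichlet one. Because the Neumann inequality runs in the opposite direction, the same style of packing argument will now produce an \emph{upper} bound on $g(\lambda) = \lambda^{-n/p}N_C(\lambda)$ for large $\lambda$, rather than a lower bound.

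First I would partition $C$ into $k^n$ congruent sub-cubes of side $1/k$, giving $\sum_{\ell=1}^{k^n}(1/k)C \approx C$. The Neumann monotonicity inequality (\ref{ndm}) then yields $N_C(\lambda) \le k^n N_C(k^{-p}\lambda)$. Multiplying by $\lambda^{-n/p}$ and using $k^n\lambda^{-n/p} = (k^{-p}\lambda)^{-n/p}$ converts this into the scaling estimate
\[
g(\lambda) \le g(k^{-p}\lambda).
\]

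Next, I would fix some $\mu_0 > 0$. Since the variational Neumann spectrum on $C$ is discrete (by the Palais--Smale argument parallel to the one cited for the Dirichlet case), $N_C$ is bounded on the compact interval $[\mu_0, 2^p\mu_0]$, so $g$ attains a finite supremum $C_2$ there. For any $\lambda \ge 2^p\mu_0$, choose $k$ to be the largest positive integer with $k^p\mu_0 \le \lambda$; then $(k+1)^p\mu_0 > \lambda$, and since $(k+1)^p \le (2k)^p$ for $k \ge 1$, the quantity $k^{-p}\lambda$ lies in $[\mu_0, 2^p\mu_0]$. The scaling estimate then gives $g(\lambda) \le g(k^{-p}\lambda) \le C_2$, and letting $\lambda \to \infty$ yields $c \le C_2 < \infty$.

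I do not expect any serious obstacle here; the argument is entirely routine once the correctly-directed monotonicity (\ref{ndm}) is in hand. The one non-computational point worth flagging is the implicit use of finiteness of $N_C(\mu)$ for each finite $\mu$, without which ``$g$ bounded on $[\mu_0, 2^p\mu_0]$'' would be meaningless; this is the Neumann analogue of the fact (cited via \cite{PAO} in the Dirichlet setting) that the variational min-max levels are a discrete sequence tending to infinity.
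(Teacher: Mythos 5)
Your proof is correct and follows essentially the same route as the paper: partition the cube into $k^n$ sub-cubes, apply the Neumann monotonicity inequality to get $g(\lambda)\le g(k^{-p}\lambda)$, and reduce any large $\lambda$ into a fixed compact window where $g$ is bounded. Your explicit flag that one needs $N_C(\mu)<\infty$ on the reference interval is a point the paper leaves implicit, but it does not change the argument.
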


\begin{proof}
As above, let $g(\lambda) = \lambda^{-n/p}N_C(\lambda)$.  Fix some $\lambda > 0$ and some positive integer $k$.  The unit cube can be partitioned into $k^n$ cubes of volume $k^{-n}$.  Hence  
\[
\sum_{\ell=1}^{k^n} k\inv C \approx C
\]
and so the domain monotonicity inequality implies 
\[
N_C(\lambda) \le k^nN_C(k^{-p}\lambda).
\]
Multiplying by $\lambda^{-n/p}$ gives
\[
g(\lambda) = \lambda^{-n/p} N_C(\lambda) \le (k^{-p}\lambda)^{-n/p} N_C(k^{-p}\lambda) = g(k^{-p}\lambda).  
\]
Now $g$ has some finite maximum $C_2$ on the interval $[1,2^p]$.  Since 
\[
(k+1)^p \le (2k)^p, \quad \text{for all } k\ge 1
\]
it follows that 
\[
[1,\infty) = \bigcup_{k=1}^\infty [k^p,k^p2^p].
\]
Therefore, given an arbitrary $\lambda \ge 1$, there exists some positive integer $k$ such that $k^{-p}\lambda \in [1,2^p]$.  Using the above inequality, it follows that 
\[
g(\lambda) \le g(k^{-p}\lambda) \le C_2.
\]
This proves that $c \le C_2 < \infty$, as needed.
\end{proof}

The proof of the following proposition is somewhat technical so we delay it until Section \ref{GB}.  

\begin{prop}
\label{gb}
Let $W\subset \R^n$ be a bounded open set with Lipschitz boundary.  Then  
\[
N_W(\lambda) \le C_2\vol(W)\lambda^{n/p},
\]
as $\lambda\to \infty$.  Here $C_2$ is a constant that depends on $n,$ $p$, and the Lipschitz constant of $W$.   
\end{prop}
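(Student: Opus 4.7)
The plan is to reduce to the cube case by tiling $W$ with pieces that are bi-Lipschitz equivalent to standard cubes, then invoking the Neumann domain monotonicity inequality (\ref{ndm}) together with the cube estimate $N_C(\mu) \le C_2 \mu^{n/p}$ already established.

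First I would prove a bi-Lipschitz quasi-invariance for the Neumann counting function: if $\Phi\f V \to V'$ is a bi-Lipschitz homeomorphism between bounded Lipschitz domains with bi-Lipschitz constants bounded by $L$, the change-of-variables formula and the chain rule for Lipschitz maps show that the pushforward $v \mapsto v \circ \Phi^{-1}$ satisfies
\[
\int_{V'} |v\circ\Phi^{-1}|^p \le C_L \int_V |v|^p, \qquad \int_{V'} |\grad(v\circ\Phi^{-1})|^p \le C_L \int_V |\grad v|^p,
\]
and similarly with the roles reversed. Consequently the pushforward distorts the energy $E$ by a bounded factor $C_L$ depending only on $L$, $n$, $p$, and composing with renormalization produces an odd continuous map $E_V^{-1}[0,\lambda) \to E_{V'}^{-1}[0, C_L\lambda)$. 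Proposition \ref{st}(i) then gives $N_V(\lambda) \le N_{V'}(C_L\lambda)$.

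Next I would construct an adapted tiling of $W$. Since $\bd W$ is Lipschitz, cover it by finitely many coordinate patches in which, after rotation, $W$ locally equals the subgraph $\{x_n < \phi_j(x')\}$ of a Lipschitz function $\phi_j$ with $\|\grad \phi_j\|_\infty \le L$. Fix a small parameter $\delta > 0$. Combine a standard $\delta$-grid of cubes in the deep interior of $W$ with ``curvilinear $\delta$-cubes'' near $\bd W$, obtained by pulling back a $\delta$-grid in the half-space via the flattening map $(x',x_n) \mapsto (x', x_n - \phi_j(x'))$. After gluing along the interior--boundary transition, this produces a partition $\cl W = \cl W_1 \cup \cdots \cup \cl W_K$ with pairwise measure-zero intersections, where each $W_i$ is bi-Lipschitz equivalent to $[0,\delta]^n$ with bi-Lipschitz constant $L_0 = L_0(L,n)$ independent of $\delta$, and $K\delta^n \le \vol(W) + O(\delta)$ as $\delta\to 0$.

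Combining these ingredients with (\ref{ndm}) and the scaling identity $N_{[0,\delta]^n}(\mu) = N_C(\delta^p\mu)$ gives
\[
N_W(\lambda) \le \sum_{i=1}^K N_{W_i}(\lambda) \le K\, N_{[0,\delta]^n}(C_{L_0}\lambda) = K\, N_C(C_{L_0}\delta^p\lambda).
\]
For $\lambda$ large enough relative to $\delta$, the cube estimate yields $N_C(C_{L_0}\delta^p\lambda) \le C_2 \delta^n \lambda^{n/p}$, so
\[
\limsup_{\lambda\to\infty} \lambda^{-n/p} N_W(\lambda) \le C_2 K\delta^n \le C_2(\vol(W) + O(\delta)).
\]
Letting $\delta \to 0$ gives the claimed asymptotic bound. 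The main obstacle is the tiling construction with bi-Lipschitz constants independent of $\delta$: one must check that each boundary piece, shaped both by the grid and by the graph $\phi_j$, flattens to a standard cube with a bi-Lipschitz constant depending only on $L$ and $n$. This requires careful bookkeeping at chart overlaps and at ``corner'' cubes that non-trivially straddle $\bd W$, but is a standard exercise in Lipschitz geometry.
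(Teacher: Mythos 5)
Your reduction steps are all sound: the bi-Lipschitz quasi-invariance $N_V(\lambda) \le N_{V'}(C_L\lambda)$ follows correctly from the pushforward estimate together with Proposition \ref{st}(i), the Neumann monotonicity inequality applies to any finite partition of $\cl W$ into open pieces with pairwise measure-zero closures, and the cube bound $N_C(\mu) \le C_2\mu^{n/p}$ is available at this point in the paper. The counting $K\delta^n \le \vol(W) + O(\delta)$ also works once the tiling exists, since the boundary cells live in a collar of volume $O(\delta)$ and each has volume comparable to $\delta^n$. The problem is that the entire weight of the argument rests on the tiling lemma, and that lemma is not proved; it is also not a ``standard exercise.'' You need, for every small $\delta$, a partition of $\cl W$ into cells that are bi-Lipschitz to $[0,\delta]^n$ with a constant $L_0(L,n)$ \emph{uniform in $\delta$}, with the interior cells being genuine grid cubes and only $O(\delta^{1-n})$ boundary cells. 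Within a single boundary chart the flattening map does produce such curvilinear cells, but the cells must be glued into a single exact partition of $W$: along the interface between the interior straight grid and a boundary chart's curvilinear grid, and along the interfaces between two overlapping boundary charts, the two grids do not mesh. Cells straddling such an interface get cut into pieces that can be arbitrarily thin slivers (whenever the interface passes close to a cell face), and slivers are not bi-Lipschitz to cubes with a uniform constant. Repairing this by merging slivers into neighboring cells is possible in principle but requires a genuine construction with its own uniform estimates (one must control the shape of a curvilinear cube with an attached sliver), and this is exactly the delicate point in all Neumann counting arguments: unlike the Dirichlet case, one cannot simply throw away or shrink the bad boundary cells, because Neumann counting functions are not monotone under passing to subdomains.

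This is also a genuinely different route from the paper, which avoids any adapted tiling altogether: there, one rescales $W$ so that it becomes an $(\eps,1)$-domain, applies Jones's Sobolev extension theorem to get a bounded extension $\mathcal E\colon W^{1,p}(W) \to W^{1,p}(\R^n)$ with norm controlled by $n$, $p$, and the Lipschitz constant, multiplies by a cutoff supported in a slightly larger set $V$ with $\vol(V) \le 2\vol(W)$, and thereby obtains an odd continuous map from $E^{-1}[0,\lambda)$ in the Neumann problem on $W$ into $E^{-1}[0,C(\lambda+1))$ in the Dirichlet problem on $V$; the already-established Dirichlet upper bound then finishes the proof. If you want to pursue your tiling approach you must either supply the uniform bi-Lipschitz mesh construction in full, or replace it with an extension-operator argument of this kind.
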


%
%

We can now prove the Neumann Weyl law for a general domain in $\R^n$.  

\begin{theorem}
Let $U \subset \R^n$ be an open bounded set with Lipschitz boundary.  Then we have $N_V(\lambda) \sim  c \vol(V) \lambda^{n/p}$ as $\lambda\to \infty$.  
\end{theorem}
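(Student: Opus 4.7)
The plan is to mirror the proof of the Dirichlet Weyl law in Section \ref{DWL}, but with the Neumann domain monotonicity inequality (\ref{ndm}) replacing the Dirichlet one and with Proposition \ref{gb} supplying an absolute bound on boundary remainders. As before we may assume $\vol(U)=1$ and introduce
\[
\underline\beta(U) = \liminf_{\lambda\to\infty}\lambda^{-n/p}N_U(\lambda),\qquad \overline\beta(U) = \limsup_{\lambda\to\infty}\lambda^{-n/p}N_U(\lambda).
\]
The goal is to show $\underline\beta(U) = \overline\beta(U) = c$.

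For the upper bound $\overline\beta(U) \le c$, fix $\eps > 0$ and tile $U$ (up to measure zero) as $\sum_{i=1}^m a_iC + W \approx U$, where the $a_iC$ are translated cubes and $W\subset U$ is a Lipschitz remainder with $\vol(W) < \eps$ and Lipschitz constant depending only on $U$. The Neumann monotonicity then yields
\[
N_U(\lambda) \le \sum_{i=1}^m N_C(a_i^p\lambda) + N_W(\lambda).
\]
Dividing by $\lambda^{n/p}$ and letting $\lambda\to\infty$, the Weyl law on cubes gives $\lambda^{-n/p}\sum_i N_C(a_i^p\lambda) \to c\sum a_i^n$, while Proposition \ref{gb} bounds $\lambda^{-n/p}N_W(\lambda)$ by $C_2\vol(W) < C_2\eps$ with $C_2$ depending only on $U$. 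Since the tiling forces $\sum a_i^n + \vol(W) = 1$, this gives $\overline\beta(U) \le c + (C_2 - c)\eps$, and sending $\eps\to 0$ yields $\overline\beta(U) \le c$.

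For the lower bound $\underline\beta(U) \ge c$, choose $a > 0$ small enough that a translate of $aU$ fits inside the unit cube $C$. For each $\eps > 0$, tile $C$ (up to measure zero) as $aU + \sum_{i=1}^m a_iC + W \approx C$, where again $W$ is a Lipschitz piece with $\vol(W) < \eps$ and Lipschitz constant controlled by $U$ alone. The Neumann monotonicity gives
\[
N_C(\lambda) \le N_U(a^p\lambda) + \sum_{i=1}^m N_C(a_i^p\lambda) + N_W(\lambda).
\]
Rearranging, dividing by $\lambda^{n/p}$, applying the Weyl law on $C$ to the first and third terms, and applying Proposition \ref{gb} to $W$, together with the volume identity $a^n + \sum a_i^n + \vol(W) = 1$, gives $\liminf_{\lambda}\lambda^{-n/p}N_U(a^p\lambda) \ge c\,a^n - C_2\eps$. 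Substituting $\mu = a^p\lambda$, so that $\lambda^{-n/p} = a^n\mu^{-n/p}$, turns this into $\underline\beta(U) \ge c - C_2\eps/a^n$, and letting $\eps\to 0$ with $a$ held fixed finishes the argument.

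The main obstacle is the construction of the two tilings with uniform Lipschitz control on $W$. The natural approach is to fix a narrow tubular collar of $\bd U$ inside $U$, whose Lipschitz geometry depends only on $U$, and to tile its complement (a region whose boundary consists of $\bd U$ away from the collar together with a piece of the collar's inner boundary) by a fine cubic grid; the cubes that fail to fit can be absorbed into the collar without worsening its Lipschitz constant. The analogous construction carried out inside $C$ around a placed copy of $aU$ produces the tiling needed for the lower bound.
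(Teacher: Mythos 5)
Your proposal is correct and follows essentially the same route as the paper: decompose $U$ (respectively the unit cube containing a copy of $aU$) into cubes plus a small Lipschitz remainder, apply the Neumann domain monotonicity inequality together with the cube Weyl law, and control the remainder via Proposition \ref{gb}. The limit bookkeeping and the final estimates $\overline\beta(U)\le c$ and $\underline\beta(U)\ge c - C_2\eps/a^n$ match the paper's argument.
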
 

\begin{proof}
Without loss of generality we can assume that $U$ has unit volume.  Define the numbers
\[
\underline \gamma(U) = \lim\inf_{\lambda\to\infty} \lambda^{-n/p}N_U(\lambda), \quad \overline \gamma(U) = \limsup_{\lambda\to \infty} \lambda^{-n/p}N_U(\lambda).  
\]
Given $\eps >0$, there exist numbers $a_1,\hdots,a_m$ and a set $V$ with  Lipschitz boundary such that $\vol(V) \le \eps$ and 
\[
V + \sum_{i=1}^m a_iC \approx U.
\]
Moreover, it is possible to choose $V$ so that $\operatorname{lip}(V) \le K$ for some constant $K$ that depends only on $\operatorname{lip}(U)$ and the dimension $n$.  In particular, this means that $N_V(\lambda)$ satisfies a growth bound 
\[
N_V(\lambda) \le C_2\vol(V) \lambda^{n/p} \le C_2\eps \lambda^{n/p}
\]
as $\lambda\to \infty$ where the constant $C_2$ is independent of $\eps$.
  
Applying the domain monotonicity inequality shows that 
\[
N_U(\lambda) \le N_V(\lambda) + \sum_{i=1}^m N_C(a_i^p\lambda). 
\]
But $N_C(a_i^p\lambda) \sim  c a_i^n \lambda^{n/p}$ as $\lambda\to\infty$ by the Weyl law for cubes. Hence multiplying both sides of the above inequality by $\lambda^{-n/p}$ and letting $\lambda\to \infty$ it follows that 
\[
\overline \gamma(U) \le   c \left(\sum_{i=1}^m a_i^n\right) + C_2\eps =  c (1-\eps) + C_2\eps.
\]
Since $\eps$ was arbitrary, this implies $\overline \gamma(U) \le  c$.  

It remains to show that $\underline \gamma(U) \ge  c$.  To this end, fix $a > 0$ so that some translate of $aU$ is contained in $C$.  Let $\eps > 0$ and choose numbers $a_1,\hdots,a_m$ and a set $V$ with Lipschitz boundary such that $\vol(V)\le \eps$ and 
\[
C \approx V + aU + \sum_{i=1}^m a_i C.
\]
It is possible to pick $V$ so that $\operatorname{lip}(V) \le K$ where $K$ is some constant depending only on $\operatorname{lip}(U)$ and $n$.  
Notice then that $\sum_{i=1}^m a_i^n = 1 - a^n - \vol(V)$. 
The domain monotonicity inequality gives  
\[
N_C(\lambda) \le N_V(\lambda) + N_U(a^p\lambda) + \sum_{i=1}^m N_C(a_i^p\lambda).
\]
Pick a sequence $\lambda_j\to\infty$ so that 
\[
a^{-n}\lambda_j^{-n/p} N_U(a^p\lambda_j) \to \underline\gamma(U).
\]
Taking $\lambda = \lambda_j$ in the above equation and multiplying both sides by $a^{-n}\lambda_j^{-n/p}$ yields 
\[
a^{-n}\lambda_j^{-n/p} N_C(\lambda_j) \le a^{-n}\lambda_j^{-n/p}\left(N_V(\lambda_j) + N_U(a^p\lambda_j) + \sum_{i=1}^m  N_C(a_i^p \lambda_j)\right)
\]
Letting $j\to\infty$ and using the fact that $\lambda_j^{-n/p} N_V(\lambda_j) \le C_2\vol(V) \le C_2\eps$ for all large $j$, it follows that that 
\[
a^{-n}  c \le a^{-n} C_2\eps + \underline\gamma(U) + a^{-n}\sum_{i=1}^m a_i^n  c.
\]
Thus 
\begin{align*}
\underline \gamma(U) &\ge a^{-n}\left( c - \sum_{i=1}^m a_i^n  c - C_2\eps\right)\\
&= a^{-n} c - a^{-n}(1 - a^n - \vol(V)) c - a^n C_2\eps \ge  c - a^nC_2\eps. \phantom{\bigg(}
\end{align*}
But $\eps$ is arbitrary and $C_2$ is independent of $\eps$ and hence $\underline \gamma(U)\ge  c$.  This proves that $\overline \gamma(U) = \underline \gamma(U) =  c$ and so the Weyl law $N_U(\lambda) \sim  c \lambda^{n/p}$ holds for $U$.
\end{proof}

\section{Equality of Dirichlet and Neumann Constants}

In this section we prove that the constant $c^0$ in the Dirichlet Weyl law is equal to the constant $c$ in the Neumann Weyl law. 
The following lemma of Gromov (\cite{G} Lemma 3.2.E$_1$) is the key ingredient in the proof.

\begin{lem}
\label{key}
Let $U\subset \R^n$ with smooth boundary.  Let $\eps > 0$ and define the set 
\[
U_\eps := \{x\in U:\, 0 < \text{dist}(x,\bd U) < \eps\}.
\]
Let $\lambda',\lambda'' > 0$ and set 
\[
\lambda = \frac{\lambda'\lambda''}{\lambda' + \lambda'' + \eps\inv}.
\]
Then there is an inequality 
\[
N_U(\lambda^p) \le N^{0}_U((\lambda')^p) + N_{U_\eps}((\lambda'')^p).
\]
\end{lem}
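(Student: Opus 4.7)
The plan is to adapt the proof of the Neumann monotonicity inequality (Proposition \ref{ndm}) with a Lipschitz cutoff near $\bd U$ that bridges the Dirichlet condition on $U$ and the Neumann condition on $U_\eps$. Take $\chi(x) := \min(\dist(x,\bd U)/\eps,\, 1)$, so that $\chi = 0$ on $\bd U$, $\chi = 1$ on $U \setminus U_\eps$, and $\vert\grad\chi\vert \le \eps\inv$ almost everywhere. For $u \in \mathcal M(U)$, the product $\chi u$ lies in $W^{1,p}_0(U)$ while $(1-\chi)u$, restricted to $U_\eps$, lies in $W^{1,p}(U_\eps)$; these are the natural candidate Dirichlet and Neumann test functions on the two sides of the collar.

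Define
\begin{gather*}
A := \{v \in \mathcal M^0(U) : E(v) < (\lambda')^p\},\\
B := \{w \in \mathcal M(U_\eps) : E(w) < (\lambda'')^p\},\\
C := \{u \in \mathcal M(U) : E(u) < \lambda^p\},
\end{gather*}
so that $\ind(A) = N^0_U((\lambda')^p)$, $\ind(B) = N_{U_\eps}((\lambda'')^p)$, and $\ind(C) = N_U(\lambda^p)$. The goal is to show $\ind(C) \le \ind(A) + \ind(B)$. Set
\begin{gather*}
C_A := \{u \in C : \chi u \not\equiv 0 \text{ and } E(\chi u) < (\lambda')^p\},\\
C_B := \{u \in C : (1-\chi)u \not\equiv 0 \text{ and } E((1-\chi)u) < (\lambda'')^p\},
\end{gather*}
which are open subsets of $C$. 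The maps $u \mapsto \chi u/\|\grad(\chi u)\|_p$ and $u \mapsto (1-\chi)u/\|(1-\chi)u\|_{W^{1,p}(U_\eps)}$ are odd and continuous from $C_A$ to $A$ and from $C_B$ to $B$ respectively. Once the covering $C \subset C_A \cup C_B$ is established, Proposition \ref{st} yields $\ind(C) \le \ind(C_A) + \ind(C_B) \le \ind(A) + \ind(B)$.

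The heart of the proof is the energy inequality responsible for the covering: if $u \in \mathcal M(U)$ satisfies $E(\chi u) \ge (\lambda')^p$ and $E((1-\chi) u) \ge (\lambda'')^p$, then $E(u) \ge \lambda^p$. The natural starting point is the pair of estimates
\[
\lambda' \|\chi u\|_p \le \|\chi\grad u\|_p + \eps\inv\|u\|_p, \qquad \lambda'' \|(1-\chi) u\|_p \le \|(1-\chi)\grad u\|_p + \eps\inv\|u\|_p,
\]
obtained from $\grad(\chi u) = \chi\grad u + u\grad\chi$, its analog for $(1-\chi)u$, and the bound $\|u\grad\chi\|_p \le \eps\inv\|u\|_p$. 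These would be combined with the pointwise equality $\vert u\vert = \vert \chi u\vert + \vert (1-\chi)u\vert$, which gives the Minkowski bound $\|\chi u\|_p + \|(1-\chi) u\|_p \ge \|u\|_p$, and with the subadditivity $\|\chi\grad u\|_p^p + \|(1-\chi)\grad u\|_p^p \le \|\grad u\|_p^p$ coming from $\chi^p + (1-\chi)^p \le 1$.

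The main obstacle will be extracting the precise bound $\lambda = \lambda'\lambda''/(\lambda' + \lambda'' + \eps\inv)$ from these elementary ingredients. A direct linear combination — dividing by $\lambda'$ and $\lambda''$ and summing — yields only the weaker estimate $\|\grad u\|_p \ge (\lambda'\lambda''/(\lambda' + \lambda'') - \eps\inv)\|u\|_p$, in which the cutoff error $\eps\inv$ appears as an additive correction that may push the bound below zero when $\eps$ is small. Recovering Gromov's sharper form, where $\eps\inv$ is absorbed into the denominator, will require treating the two gradient estimates jointly rather than adding them — most plausibly by raising to $p$-th powers and exploiting $\chi^p + (1-\chi)^p \le 1$ to couple the constraints so that the cutoff contributions combine multiplicatively with $\lambda'$ and $\lambda''$. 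I expect this to be the delicate computation at the core of Gromov's Lemma 3.2.E$_1$, and the remainder of the proof is then a mechanical application of the index calculus developed in Proposition \ref{st}.
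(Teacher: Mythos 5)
Your overall architecture (cover $C$ by two open subsets $C_A$, $C_B$, apply Proposition \ref{st}(ii) and then \ref{st}(i)) matches the paper's, but there is a genuine gap: the covering claim $C\subset C_A\cup C_B$ --- equivalently, the implication that $E(\chi u)\ge(\lambda')^p$ and $E((1-\chi)u)\ge(\lambda'')^p$ force $E(u)\ge\lambda^p$ --- is the entire content of the lemma, and you do not prove it. You correctly diagnose that adding the two gradient estimates only yields an additive $-\eps\inv$ correction, which is vacuous for small $\eps$, but you then defer the real work to a speculative device (raising to $p$-th powers and using $\chi^p+(1-\chi)^p\le 1$) that is neither carried out nor the mechanism by which the sharp constant actually arises.

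The missing idea is structural: on the $U_\eps$ side you should not cut off at all. The Neumann class $\mathcal M(U_\eps)$ imposes no boundary condition, so the plain restriction $u\vert_{U_\eps}$ is an admissible competitor, and $C_B$ should be defined by the condition $E(u\vert_{U_\eps})<(\lambda'')^p$. With that choice the only cutoff error in the whole argument is the single term $\eps\inv\|u\|_{L^p(U_\eps)}$ coming from $\grad(\chi u)=\chi\grad u+u\grad\chi$ (note that $\grad\chi$ is supported in $U_\eps$, so the error is $\eps\inv\|u\|_{L^p(U_\eps)}$, not $\eps\inv\|u\|_{L^p(U)}$ as you wrote). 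If $u\notin C_A\cup C_B$, then $\|\grad u\|_{L^p(U)}\ge\|\grad u\|_{L^p(U_\eps)}\ge\lambda''\|u\|_{L^p(U_\eps)}$, so this error term is bounded by $(\eps\lambda'')\inv\|\grad u\|_{L^p(U)}$ and is absorbed into the coefficient of $\|\grad u\|$ rather than subtracted from the eigenvalue bound. Chaining the inequalities --- $\|\grad u\|_{L^p(U)}\ge\lambda''\|u\|_{L^p(U_\eps)}\ge\lambda''\bigl(\|u\|_{L^p(U)}-\|\chi u\|_{L^p(U)}\bigr)\ge\lambda''\|u\|_{L^p(U)}-(\lambda''/\lambda')\|\grad(\chi u)\|_{L^p(U)}\ge\lambda''\|u\|_{L^p(U)}-\bigl(\lambda''/\lambda'+(\eps\lambda')\inv\bigr)\|\grad u\|_{L^p(U)}$ --- rather than adding them yields $\bigl(1+\lambda''/\lambda'+(\eps\lambda')\inv\bigr)\|\grad u\|_{L^p(U)}\ge\lambda''\|u\|_{L^p(U)}$, which is exactly $\|\grad u\|_{L^p(U)}\ge\lambda\|u\|_{L^p(U)}$ with the stated $\lambda$. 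You would also need the degenerate case $u\vert_{U_\eps}\equiv 0$, where $\chi u=u$ and $u\in C_A$ because $\lambda<\lambda'$.
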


\begin{proof} Let $\eps$, $\lambda$, $\lambda'$, $\lambda''$ be as in the statement of the lemma.  Define the sets
\begin{gather*}
A := \{v \in \mathcal M^0(U):\, E(v) < (\lambda')^p\},\\
B := \{w\in  \mathcal M(U_\eps):\, E(w) < (\lambda'')^p\},\\
C := \{u\in  \mathcal M(U):\, E(u) < \lambda^p\}.
\end{gather*} 
Define $\varphi:U\to \R$ by 
\[
\varphi(x) = \begin{cases}
\eps\inv \text{dist}(x,\bd U), &\text{if } x\in U_{\eps}\\
1, &\text{otherwise}
\end{cases}
\]
and then set 
\[
C_A := \{u\in C:\, E(\varphi u) < (\lambda')^p\}. 
\]
Also define  
\[
C_B := \{u\in C:\, u\vert_{U_{\eps}}\not\equiv 0 \text{ and } E(u\vert_{U_\eps}) < (\lambda'')^p\}.  
\]
We claim that $C \subset C_A \cup C_B$.  Given this the result follows.  Indeed $C_A$ and $C_B$ are open in $C$ and so Proposition \ref{st}(ii) gives 
\[
\ind(C) \le \ind(C_A) + \ind(C_B).
\]
Moreover, there are odd continuous  maps 
\begin{gather*}
C_A\to A, \quad u \mapsto \frac{\varphi u}{\|\varphi u\|} \\
C_B\to B, \quad u\mapsto \frac{u\vert_{U_\eps}}{\|u\vert_{U_\eps}\|}.
\end{gather*}
Hence Proposition \ref{st}(i) gives
\[
\ind(C_A)\le \ind(A), \quad \ind(C_B) \le \ind(B)
\]
and it follows that $\ind(C) \le \ind(A) + \ind(B)$, as needed.

It remains to show the claim.   So suppose that $u\in C$.  If $u\vert_{U_\eps} = 0$, then $\varphi u = u$ and so $E(\varphi u) = E(u) < \lambda^p < (\lambda')^p$.  Thus $u\in C_A$.  Hence we may assume that $u\vert_{U_\eps} \neq 0$.  Suppose for contradiction that $u\notin C_A\cup C_B$.    Then 
\begin{gather*}
\| \grad(\varphi u)\|_{L^p(U)} \ge \lambda' \|\varphi u\|_{L^p(U)},\\
\|\grad u\|_{L^p(U)} \ge \| \grad u\|_{L^p(U_\eps)} \ge \lambda'' \|u\|_{L^p(U_\eps)}.
\end{gather*}
Moreover, there are inequalities
\begin{gather*}
\| \grad(\varphi u)\|_{L^p(U)} \le \|\grad u\|_{L^p(U)} + \eps\inv \|u\|_{L^p(U_\eps)},\\
\|u\|_{L^p(U)} \le \|\varphi u\|_{L^p(U)} + \|u\|_{L^p(U_\eps)}.
\end{gather*}
Therefore  
\begin{align*}
\|\grad u\|_{L^p(U)} &\ge \lambda'' \|u\|_{L^p(U_\eps)} \\
&= \lambda''\left(\|u\|_{L^p(U_\eps)} + \|\varphi u\|_{L^p(U)}\right) - \lambda'' \|\varphi u\|_{L^p(U)} \phantom{\frac{\lambda'}{\lambda''}}\\
&\ge \lambda'' \|u\|_{L^p(U)} - \frac{\lambda''}{\lambda'} \|\grad(\varphi u)\|_{L^p(U)} \\
&\ge \lambda'' \|u\|_{L^p(U)} - \frac{\lambda''}{\lambda'}\left(\|\grad u\|_{L^p(U)} + \eps\inv \|u\|_{L^p(U_\eps)}\right)\\
&\ge \lambda'' \|u\|_{L^p(U)} - \left(\frac{\lambda''}{\lambda'} + \frac{1}{\eps\lambda'}\right)\|\grad u\|_{L^p(U)}.
\end{align*}
Rearranging and using the definition of $\lambda$ gives 
\[
\|\grad u\|_{L^p(U)} \ge \lambda \|u\|_{L^p(U)}.
\]
This is a contradiction, and the claim follows.
\end{proof}

\begin{theorem}
Let $c^0$ be the constant appearing in the Dirichlet Weyl law and let $c$ be the constant appearing the Neumann Weyl law.  Then $c^0 =  c$.  
\end{theorem}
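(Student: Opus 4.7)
The inequality $c^0\le c$ has already been recorded in the paper, so the task reduces to proving $c\le c^0$, and the only substantial tool available is Lemma \ref{key}. The plan is to apply that lemma on a fixed smooth reference domain, let $\lambda\to\infty$ to invoke the Weyl laws just established, and then carefully send auxiliary parameters to their limits in the right order. Concretely, fix once and for all a smooth bounded domain $U\subset\R^n$ (say, the open unit ball). For all sufficiently small $\eps>0$ the collar $U_\eps$ also has Lipschitz (even smooth) boundary, so the Dirichlet Weyl law applies to $U$ and the Neumann Weyl law applies to both $U$ and $U_\eps$.

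Given $\delta>0$ and $\eps>0$, for each $\lambda>0$ I will choose $\lambda'=(1+\delta)\lambda$ and let $\lambda''$ be the unique positive number making the relation of Lemma \ref{key} hold, namely
\[
\lambda''=\frac{(1+\delta)\lambda+\eps^{-1}}{\delta}.
\]
Then $\lambda'/\lambda=1+\delta$ exactly and $\lambda''/\lambda\to(1+\delta)/\delta$ as $\lambda\to\infty$. Dividing the inequality of Lemma \ref{key} by $\lambda^n$ and letting $\lambda\to\infty$, the three terms converge (by the Dirichlet Weyl law on $U$ and the Neumann Weyl laws on $U$ and $U_\eps$) to
\[
c\,\vol(U)\;\le\; c^0\,\vol(U)\,(1+\delta)^n \;+\; c\,\vol(U_\eps)\left(\tfrac{1+\delta}{\delta}\right)^{\!n}.
\]
Now send $\eps\to 0$ with $\delta$ fixed: since $U$ is smooth, $\vol(U_\eps)\to 0$, killing the second term. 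Then send $\delta\to 0$ to conclude $c\,\vol(U)\le c^0\,\vol(U)$, i.e.\ $c\le c^0$.

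The crux of the argument, and the only subtlety, is the asymmetric choice of $\lambda'$ and $\lambda''$. A symmetric choice $\lambda'=\lambda''$ forces $\lambda'/\lambda\to 2$ and yields only $c\le 2^n c^0$, which is useless. Pushing $\lambda'/\lambda$ down to $1+\delta$ is bought at the price of $\lambda''/\lambda\sim 1/\delta$, producing the dangerous factor $\delta^{-n}$ in front of $\vol(U_\eps)$; the whole point is that this factor is harmless so long as one takes $\eps\to 0$ before $\delta\to 0$, since $\vol(U_\eps)$ then vanishes while $\delta$ is still bounded away from zero. Apart from arranging this trade-off, everything else is a mechanical application of results already proved in the excerpt.
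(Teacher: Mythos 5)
Your proof is correct and follows essentially the same route as the paper: the same asymmetric choice $\lambda'=(1+\delta)\lambda$, $\lambda''\sim\lambda/\delta$ in Lemma \ref{key} on the unit ball, with the limits taken in the same order ($\lambda\to\infty$, then $\eps\to 0$, then $\delta\to 0$). The only cosmetic difference is that you control the collar term via the Neumann Weyl law on the (smooth) annulus $U_\eps$, whereas the paper uses the uniform growth bound of Proposition \ref{gb}; both suffice since all that is needed is a bound of the form $C\,\vol(U_\eps)\,\delta^{-n}$ that vanishes as $\eps\to 0$ with $\delta$ fixed.
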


\begin{proof}
Let $B$ be the unit ball in $\R^n$.  Fix some small $\eta > 0$.  Let $\eps > 0$ and for each $\lambda > 0$ put 
\[
\lambda' = (1+\eta)\lambda, \quad \lambda'' = \frac{\lambda+\eps\inv}{\eta}
\]
so that 
\[
\lambda = \frac{\lambda'\lambda''}{\lambda' + \lambda'' + \eps\inv}.
\]
Notice that for $\lambda$ large enough there is an inequality 
$
\lambda'' \le {2\lambda}/{\eta}.
$
Using this inequality in conjunction with Lemma \ref{key} shows that 
\begin{align*}
\lambda^{-n}N_B(\lambda^p) &\le \lambda^{-n}N^{0}_B((1+\eta)^p\lambda^p) + \lambda^{-n}N_{B_\eps}(2^p\lambda^p/\eta^p)\\
&\le \lambda^{-n}N^{0}_B((1+\eta)^p\lambda^p) + C_2 \vol(B_\eps)(2/\eta)^{n}
\end{align*}
for all large $\lambda$.  Letting $\lambda\to \infty$, this implies that 
\[
 c \le c^0(1+\eta)^{n} + C_2 \vol(B_\eps)(2/\eta)^{n}.
\]
Taking $\eps \to 0$ this gives $ c\le c^0(1+\eta)^{n}$, and then letting $\eta\to 0$ yields 
$
 c\le c^0.
$
The opposite inequality $c^0 \le c$ is clear, and the result follows.
\end{proof}

\section{The Weyl Law on Closed Manifolds}

Let $(X^n,g)$ be a closed Riemannian manifold.  One defines the $p$-Laplacian $\lap_p u = \div(\vert \grad u\vert^{p-2} \grad u)$ and the space 
\[
\mathcal M(X) = \left\{u\in W^{1,p}(X):\, \|u\|_{W^{1,p}} = 1\right\}.
\]
The variational spectrum of $\lap_p$ on $X$ and the counting function $N_X(\lambda)$ are then defined as before via a min-max procedure involving the cohomological index.  The goal of this section is to show that a Weyl law $N_X(\lambda) \sim c \vol(X) \lambda^{n/p}$ holds on $X$, thus proving Theorem A.

As a first step consider some $U\subset X$ with Lipschitz boundary.  Suppose $U$ is small enough that we can find a chart $\varphi:V\to U$ where the metric $g = (g_{ij})$ on $U$ satisfies 
\[
(1-\eps)^2(\delta_{ij}) \le (g_{ij}) \le (1+\eps)^2(\delta_{ij}).
\]
We will call such a set $\eps$-admissible.  
If $U$ is $\eps$-admissible then energy function on $U$ is given by 
\[
E_U(u) = \frac{\int_V \left\vert g^{ij} \frac{\bd u}{\bd x_i}\frac{\bd u}{\bd x_j}\right\vert^{p/2} \sqrt g \, dx}{\int_V \vert u\vert^p \sqrt g\, dx}
\]
and hence there are inequalities 
\[
K_1(\eps) E_V(u) \le E_U(u) \le K_2(\eps) E_V(u)
\]
where 
\[
K_1(\eps),K_2(\eps) \to 1, \quad \text{as } \eps \to 0.
\]
It follows that there are comparisons 
\[
N^0_V(K_2(\eps)\inv\lambda) \le N^0_U(\lambda) \le N_U(\lambda) \le N_V(K_1(\eps)\inv\lambda)
\]
for every $\lambda > 0$.  There is also a volume comparison 
\[
(1-\eps)^n\vol(V) \le \vol(U) \le (1+\eps)^n\vol(V).
\]
It is now possible to prove a Weyl law for $\lap_p$ on $X$.

\begin{customthm}{A}
Let $(X^n,g)$ be a closed Riemannian manifold.  Then there is a Weyl law $N_X(\lambda) \sim c\vol(X)\lambda^{n/p}$.
\end{customthm}

\begin{proof}
Without loss of generality we may assume that $\vol(X) = 1$.  Define the quantities 
\[
\underline\gamma(X) = \liminf_{\lambda\to\infty} \lambda^{-n/p}N_X(\lambda), \quad \overline\gamma(X) = \limsup_{\lambda\to\infty} \lambda^{-n/p}N_X(\lambda).
\]
Pick some $\eps > 0$. Choose disjoint $\eps$-admissible open sets $U_1,\hdots,U_m$ in $M$ such that 
\[
\sum_{i=1}^m \vol(U_i) \ge 1 - \eps.
\]
For each $i$, let $\varphi_i:V_i\to U_i$ be a chart where the metric is almost Euclidean.  
Arguing as in Section \ref{dirin} there is an inequality  
\[
N_M(\lambda) \ge \sum_{i=1}^m N_{U_i}^0(\lambda) \ge \sum_{i=1}^m N^0_{V_i}(K_2(\eps)\inv\lambda).
\]
Multiplying by $\lambda^{-n/p}$ and then letting $\lambda\to \infty$ and using the Weyl law for domains in Euclidean space, this implies 
\[
\underline \gamma(M) \ge \sum_{i=1}^m c \vol(V_i) K_2(\eps)^{-n/p} \ge c (1+\eps)^{-n}(1-\eps)K_2(\eps)^{-n/p}.
\]
Letting $\eps \to 0$ it follows that $\underline \gamma(M) \ge c$.  

It remains to show that $\overline \gamma(M) \le c$.  To this end, let $\eps > 0$ and then choose $\eps$-admissible sets $U_1,\hdots,U_m$ so that 
\[
M = \cl U_1 \cup \hdots \cup \cl U_m,
\]
and each intersection $\cl U_i \cap \cl U_j$ has measure 0.  Such sets can be constructed using the argument in the proof of 4.2 in \cite{LMN}.   For each $i$, let $\varphi_i:V_i\to U_i$ be a chart where the metric is almost Euclidean.  Arguing as in Section \ref{neuin} there is an inequality
\[
N_M(\lambda) \le \sum_{i=1}^m N_{U_i}(\lambda) \le \sum_{i=1}^m N_{V_i}(K_1(\eps)\inv\lambda).
\]
Multiplying by $\lambda^{-n/p}$ and letting $\lambda\to \infty$ and using the Weyl law for domains in Euclidean space, this implies 
\begin{align*}
\overline\gamma(M) &\le \sum_{i=1}^m c \vol(V_i) K_1(\eps)^{-n/p} \le c(1-\eps)^{-n}K_1(\eps)^{-n/p}.
\end{align*}
Letting $\eps\to 0$ it follows that $\overline \gamma(M) \le c$.  This proves the result.
\end{proof} 

\section{Growth Bound for the Neumann Counting Function} \label{GB}

The goal of this final section is to prove the growth bound in Proposition \ref{gb}.  The argument uses Sobolev extension operators.  

\begin{defn}
Let $U\subset \R^n$ be a bounded open set.  Then $U$ is $L$-Lipschitz provided there is a covering of $\bd U$ by balls $(B_i)$ with the following property: for each $i$ there is an $L$-Lipschitz function 
$
f_i:\R^{n-1}\to \R
$
such that, up to translation and rotation, $U\cap B_i$ coincides with the set 
\[\{(x_1,\hdots,x_n) \in \R^n:\, x_n < f(x_1,\hdots,x_{n-1})\} \cap B_i.
\]
\end{defn}

Jones introduced the following more general class of domains in \cite{J}.  

\begin{defn}
Let $U$ be an open set in $\R^n$ and let $\eps,\delta > 0$.  Then $U$ is called an $(\eps,\delta)$-domain provided for every $x,y\in U$ with $\vert x-y\vert < \delta$ there is a rectifiable arc $\gamma$ joining $x$ to $y$ with 
\begin{enumerate}
\item[(i)] $\text{Length}(\gamma) \le \dfrac{\vert x-y\vert}{\eps}$,\\
\item[(ii)] $\operatorname{dist}(z,\bd U) \ge \dfrac{\eps \vert x-z\vert \vert y-z\vert}{\vert x-y\vert}$ for all $z\in \gamma$. 
\end{enumerate}
\end{defn}

Every $L$-Lipschitz domain $U$ is an $(\eps,\delta)$ domain for some choice of $\eps$ and $\delta$.  Moreover, $\eps$ can be taken to depend only on $L$ and $n$.  
The following result is due to Jones \cite{J}.   

\begin{theorem}
\label{ext}
Let $U\subset \R^n$ be an $(\eps,\delta)$-domain.  Then there exists a continuous linear map 
\[
\mathcal E: W^{1,p}(U) \to W^{1,p}(\R^n)
\]
with the property that $(\mathcal Eu)\vert_U = u$.  Moreover, the norm of $\mathcal E$ depends only on $n$, $p$, $\eps$, and $\delta$.  
\end{theorem}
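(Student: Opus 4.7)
The plan is to follow Jones's original Whitney-reflection construction. I would take Whitney decompositions $\mathcal W_1$ of $U$ and $\mathcal W_2$ of $\R^n \setminus \cl U$ into dyadic cubes $Q$ satisfying $\diam(Q) \le \dist(Q,\bd U) \le 4\diam(Q)$, and split $\mathcal W_2$ into a ``near'' family $\mathcal W_2^n$ of cubes with $\diam(Q) \le c\eps\delta$ and a ``far'' family. On far cubes the extension will be set to zero after multiplying by a smooth cutoff supported in a $\delta$-neighborhood of $\bd U$, so only the near cubes require genuine construction. For each $Q \in \mathcal W_2^n$ I would assign a reflected interior cube $Q^* \in \mathcal W_1$ with $\diam(Q^*) \approx \diam(Q)$ and $\dist(Q,Q^*) \lesssim \eps\inv \diam(Q)$, by choosing $p \in \bd U$ closest to $Q$, applying the $(\eps,\delta)$-arc condition between points straddling $p$ on either side of $\bd U$, and reading off an interior Whitney cube of the correct scale from condition (ii).

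Next, fix a smooth partition of unity $\{\phi_Q\}_{Q \in \mathcal W_2}$ subordinate to slight enlargements $\tfrac{17}{16}Q$ with $|\grad \phi_Q| \lesssim \diam(Q)\inv$, and define
\[
\mathcal E u = u \text{ on } U, \qquad \mathcal E u = \sum_{Q \in \mathcal W_2^n} \phi_Q \, (u)_{Q^*} \text{ on } \R^n \setminus \cl U,
\]
where $(u)_{Q^*}$ denotes the mean of $u$ over $Q^*$. (A fixed-order polynomial projection can be used in place of the average to extend higher-order Sobolev classes, but for $W^{1,p}$ averages suffice.) The gradient estimate on a near cube $R$ exploits $\sum_Q \phi_Q \equiv 1$ to write
\[
\grad(\mathcal E u) = \sum_{Q} \grad\phi_Q \cdot \bigl[(u)_{Q^*} - (u)_{R^*}\bigr],
\]
reducing the problem to bounding differences of averages on the interior partners $R^*, Q^*$ of neighboring exterior cubes. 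These differences are controlled by a telescoping Poincar\'e inequality along a \emph{Jones chain}: a uniformly bounded sequence of comparable-size interior Whitney cubes joining $R^*$ to $Q^*$, each consecutive pair sharing a face. Summing $\|\grad(\mathcal E u)\|_{L^p(R)}^p$ over $R \in \mathcal W_2^n$ and using that each interior Whitney cube appears in only $O(1)$ such chains yields $\|\grad \mathcal E u\|_{L^p(\R^n)} \lesssim \|u\|_{W^{1,p}(U)}$; the analogous $L^p$ bound on $\mathcal E u$ itself is similar but easier. To upgrade from bounded-on-each-open-piece to $\mathcal E u \in W^{1,p}(\R^n)$, I would first argue for $u \in C^\infty(\cl U)$, verifying continuity across $\bd U$ from the fact that $(u)_{Q^*} \to u(x)$ at Lebesgue points of $u$ on $\bd U$ as $\diam(Q) \to 0$, and then extend to general $W^{1,p}(U)$ by density and the uniform bound.

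The main obstacle is the reflection-plus-chain step. For every pair of touching near exterior cubes $Q_1, Q_2 \in \mathcal W_2^n$ of comparable size, their reflected partners $Q_1^*, Q_2^*$ must be joined inside $U$ by a sequence of interior Whitney cubes of comparable diameter whose length is bounded in terms of $\eps$ alone. This is precisely where condition (ii) of the $(\eps,\delta)$-definition becomes indispensable: it rules out inward cusps that would force any connecting arc to pass through vanishingly small interior Whitney cubes, ensuring the chain exists and has uniformly bounded combinatorics. Once the chain lemma is in hand, the analytic bookkeeping — Poincar\'e, bounded overlap, and density — is by comparison routine, and the constants produced depend only on $n$, $p$, and the Whitney/reflection constants, hence only on $n$, $p$, $\eps$, and $\delta$.
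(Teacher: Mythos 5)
The paper does not prove this theorem; it is cited from Jones \cite{J} and used as a black box, so there is no ``paper's proof'' to compare against. Your proposal is a faithful reconstruction of Jones's original Whitney–reflection argument and touches all the right landmarks: Whitney decompositions of $U$ and $\R^n\setminus\cl U$, the near/far split at scale $\eps\delta$, reflected partners $Q\mapsto Q^*$ produced from the $(\eps,\delta)$-arc condition, a partition of unity subordinate to dilated exterior Whitney cubes, the substitution of means for polynomial projections (which is exactly the degree-$0$ specialization of Jones's $P_j$ and is correct for $W^{1,p}$), the $\sum_Q\grad\phi_Q\equiv 0$ trick replacing $(u)_{Q^*}$ by $(u)_{Q^*}-(u)_{R^*}$, the telescoping Poincar\'e estimate along interior Whitney chains of uniformly bounded length furnished by condition (ii), the bounded-overlap counting to globalize, and density to pass from smooth $u$ to general $u\in W^{1,p}(U)$. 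You have also correctly identified the chain lemma as the real content and (ii) as the geometric hypothesis that makes it go.

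One subtlety you gloss over, and which you should keep visible in a full write-up, is the interface between near and far cubes: the identity $\sum_Q\grad\phi_Q\equiv 0$ is only available on exterior cubes all of whose Whitney neighbours are also near, and in the transition annulus where $\dist(\cdot,\bd U)\asymp\eps\delta$ the cutoff contributes a gradient of size $\lesssim(\eps\delta)\inv$ that must be absorbed separately (Jones does this with a direct $L^p$ estimate on averages in that region, using that its thickness is $\asymp\eps\delta$). This is also essentially the only place the $\delta$-dependence of $\|\mathcal E\|$ genuinely enters, so it is worth not suppressing. A second, smaller point: your continuity-at-Lebesgue-points argument for $u\in C^\infty(\cl U)$ is cleaner if replaced by the observation that the weak derivative identity needs checking only on test functions, and across $\bd U$ this follows because $\bd U$ has measure zero and $\mathcal E u$ agrees with $u$ up to the boundary in $W^{1,p}_{\mathrm{loc}}$ norm on strips of vanishing width.
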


It is now possible to prove the growth bound.  

\begin{proof} (Proposition \ref{gb})
Let $U\subset \R^n$ be a bounded open set and assume that $U$ is $L$-Lipschitz.  We need to check that 
\[
N_U(\lambda) \le C \vol(U) \lambda^{n/p}, \quad\text{as } \lambda\to \infty
\]
for some constant $C$ that depends only on $n$, $p$, and $L$.  Notice that if $U$ is $L$-Lipschitz then so is the scaled copy $aU$ for any $a > 0$.  Since the above inequality is scale invariant, it suffices to show that 
\[
N_{aU}(\lambda) \le C \vol(aU) \lambda^{n/p}, \quad \text{as } \lambda\to \infty
\]
for some $a > 0$.  

Since $U$ is $L$-Lipschitz, it is an $(\eps,\delta)$-domain for some choice of $\eps$ and $\delta$.  Hence for $a$ large enough, $aU$ will be an $(\eps,1)$-domain.  Choose an open set $V$ containing the closure of $U$ with $\vol(V) \le 2\vol(U)$.  Then it is still true that $\vol(aV) \le 2\vol(aU)$.  Moreover, for $a$ large enough $aV$ will contain the 1-neighborhood of $aU$.  
For notational convenience put 
\[
\widetilde U = aU\quad \text{and}\quad \widetilde V = aV.
\]
By Theorem \ref{ext} there is an extension operator 
\[
\mathcal E:W^{1,p}(\widetilde U) \to W^{1,p}(\R^n).  
\]
Since $\delta = 1$ and $\eps$ depends only on $L$ and $n$, it follows that the norm of $\mathcal E$ depends only on $n$, $p$, and $L$.  

Let $\zeta$ be a cutoff function with $\zeta\equiv 1$ on $\widetilde U$ and $\zeta\equiv 0$ outside of $\widetilde V$.  It is possible to choose $\zeta$ so that $\vert \grad \zeta\vert \le 2$ everywhere.   Define an odd continuous mapping
\begin{gather*}
G: W^{1,p}(\widetilde U) \to W^{1,p}_0(\widetilde V),\\
u \mapsto \zeta\cdot \mathcal Eu.
\end{gather*}
There is an estimate
\begin{align*}
E(Gu) = \frac{\int_{\widetilde V} \vert \grad(\zeta\cdot \mathcal Eu)\vert^p}{\int_{\widetilde V} \vert \zeta\cdot \mathcal Eu\vert^p} &\le C \left(\frac{\int_{\widetilde V}  \vert \grad(\mathcal Eu)\vert^p  \vert \zeta\vert^p + \vert \mathcal Eu\vert^p \vert \grad \zeta\vert^p}{\int_{\widetilde U} \vert u\vert^p}\right)\\
&\le {C} \left(\frac{\int_{\widetilde V} \vert \grad(\mathcal Eu)\vert^p + \vert \mathcal E u\vert^p}{\int_{\widetilde U} \vert u\vert^p}\right)\\
&\le {C} \left(\frac{\int_{\widetilde U} \vert \grad u\vert^p + \vert u\vert^p}{\int_{\widetilde U} \vert u\vert^p} \right)\\
&\le {C} (E(u) + 1).\phantom{\bigg(}
\end{align*}
Now let $A = \{u\in W^{1,p}(\widetilde U):\, E(u) < \lambda\}$.  Then the above estimate implies that 
\[
E(Gu) \le {C}(\lambda + 1)
\]
for every $u\in A$.  By Proposition \ref{st}(i) this implies 
\[
N_{\widetilde U}(\lambda) \le N^0_{\widetilde V}\left({C}(\lambda + 1)\right) \le C \vol(\widetilde V) (\lambda + 1)^{n/p}.
\]
Therefore 
\[
N_{\widetilde U}(\lambda) \le C \vol(\widetilde U) \lambda^{n/p}
\]
for all sufficiently large $\lambda$, as needed.
\end{proof}

\bibliographystyle{plain}
\bibliography{p-laplacian.bib}

\end{document}